\numberwithin{equation}{section}
\newtheorem{theorem}{{\sc Theorem}}[section]
\newtheorem{cor}[theorem]{{\sc Corollary}}
\newtheorem{lemma}[theorem]{{\sc Lemma}}
\newtheorem{prop}[theorem]{{\sc Proposition}}
\theoremstyle{remark}
\newtheorem{remark}[theorem]{{\sc Remark}}
\theoremstyle{definition}
\newcommand{\R}{\mathbb{R} }
\newcommand{\N}{\mathbb{N} }
\newcommand{\B}{\mathcal{B}}
\newcommand{\F}{\mathcal{F}}
\newcommand{\G}{\mathcal{G}}
\newcommand{\D}{\mathcal{D}}
\newcommand{\Prob}{\mathbb{P}}
\newcommand{\E}{\mathbb{E}}
\newcommand{\Gammabar}{\bar{\Gamma}}
\providecommand{\abs}[1]{\lvert #1\rvert}
\providecommand{\babs}[1]{\bigl\lvert #1\bigr\rvert}
\providecommand{\Babs}[1]{\Bigl\lvert #1\Bigr\rvert}
\providecommand{\norm}[1]{\lVert #1\rVert}
\providecommand{\fnorm}[1]{\lVert #1\rVert_\infty}
\DeclareMathOperator{\Var}{Var}
\DeclareMathOperator{\dom}{dom}
\DeclareMathOperator{\Cov}{Cov}
\renewcommand{\phi}{\varphi}
\renewcommand{\epsilon}{\varepsilon}
\renewcommand{\rho}{\varrho}
\begin{document}
{
\title[The Gamma Stein equation]{The Gamma Stein equation  \\  and non-central de Jong theorems \\~\\\small{\it Dedicated to the memory of Charles M. Stein }  }

\author{  Christian D\"obler \and Giovanni Peccati}
\thanks{\noindent Universit\'{e} du Luxembourg, Unit\'{e} de Recherche en Math\'{e}matiques \\
E-mails: christian.doebler@uni.lu, giovanni.peccati@gmail.com\\
{\it Keywords: Degenerate $U$-Statistics; De Jong Theorem; Exchangeable Pairs; Hoeffding Decomposition; Gamma Approximation; Multiple Stochastic Integrals; Stein Equation; Stein's Method. } }
\begin{abstract} We study the Stein equation associated with the one-dimensional Gamma distribution, and provide novel bounds, allowing one to effectively deal with test functions supported by the whole real line. We apply our estimates to derive new quantitative results involving random variables that are non-linear functionals of random fields, namely: (i) a non-central quantitative de Jong theorem for sequences of degenerate $U$-statistics satisfying minimal uniform integrability conditions, significantly extending previous findings by de Jong (1990), Nourdin, Peccati and Reinert (2010) and D\"obler and Peccati (2016), (ii) a new Gamma approximation bound on the Poisson space, refining previous estimates by Peccati and Th\"ale (2013), and (iii) new Gamma bounds on a Gaussian space, strengthening estimates by Nourdin and Peccati (2009). As a by-product of our analysis, we also deduce a new inequality for Gamma approximations {\it via} exchangeable pairs, that is of independent interest.
\end{abstract}

\maketitle

\section{Introduction}\label{intro}

\subsection{Overview}

The aim of this paper is to derive new explicit estimates for one-dimensional Gamma approximations, and then to apply our general findings to derive several non-central approximation results for sequences of random variables that have the form of non-linear functionals of a random measure. The random measures we are interested in are either the empirical measure associated with a sequence of independent random variables, or a Poisson or Gaussian measure. As discussed below, our applications significantly refine and generalise previous results about the Gamma approximation of degenerate and not necessarily symmetric $U$-statistics \cite{DP16, deJo87, deJo89, deJo90, PTh}, of smooth random variables on the Poisson space \cite{PSTU, PTh}, and of smooth functionals of a Gaussian field \cite{NPR-aop,NP09a,NP-ptrf}.

\medskip

From now on, for fixed $r,\lambda\in (0,\infty)$, we will denote by $\Gamma(r,\lambda)$ the \textit{Gamma distribution} with \textit{shape parameter} $r$ and \textit{rate} $\lambda$ which has probability density function (p.d.f.)
\begin{equation*}
p_{r,\lambda}(x)=\begin{cases}
                  \frac{\lambda^r}{\Gamma(r)}x^{r-1}e^{-\lambda x}\,,&\text{if }x>0\\
                  0\,,&\text{otherwise,}
                 \end{cases}
\end{equation*}
where 
\begin{equation*}
 \Gamma(t):=\int_0^\infty x^{t-1}e^{-x}dx
\end{equation*}
denotes the \textit{Euler Gamma function}. We denote the corresponding distribution function by $F_{r,\lambda}$. It is well-known that $X_{r,\lambda}\sim\Gamma(r,\lambda)$ has mean $r/\lambda$ and variance $r/\lambda^2$ and that, if $Y=aX_{r,\lambda}$ for some $a>0$, then $Y$ has distribution $\Gamma(r,a^{-1}\lambda)$.
For $\nu>0$, we also denote by $\Gammabar(\nu)$ the so-called \textit{centered Gamma distribution} with parameter $\nu$ which by definition is the distribution of 
\[Z_\nu:=2X_{\nu/2,1}-\nu\,,\]
where, again, $X_{\nu/2,1}$ has distribution $\Gamma(\nu/2,1)$. Notice that, if $\nu$ is an integer, then $\Gammabar(\nu)$ has a centered $\chi^2$ distribution with $\nu$ degrees of freedom. According to the previous discussion, one has that
\begin{equation*}
 \E[Z_\nu]=0\quad\text{and}\quad \Var(Z_\nu)=\E[Z_\nu^2]=2\nu\,;
\end{equation*}
also, the following moment identity (already exploited in \cite{NP09a}), will play an important role throughout the paper:
\begin{equation}\label{e:lcm}
 \E[Z_\nu^4]-12\E[Z_\nu^3]-12\nu^2+48\nu=0\,.
\end{equation}

\medskip

One of our principal aims in the sections to follow is to obtain several explicit estimates on quantities of the type
$$
d(W,X_{r,\lambda}) := \sup_{h\in \mathcal{H}} \left| \E[h(W)] - \E[h(X_{r,\lambda})]\right|,
$$
where $\mathcal{H}$ is a suitable class of test functions. The strategy we will adopt in order to do so, is to derive new estimates on the solutions of the {\it Gamma Stein equation}
\begin{equation}\label{steineq1}
 xf'(x)+(r-\lambda x)f(x)=h(x)-\E[h(X_{r,\lambda})], \quad x\in \R,
\end{equation}
where $h$ is an element of $\mathcal{H}$, and then to effectively use our bounds in the framework of {\it exchangeable pairs} (see \cite{DP16, St86}). We will see that our results significantly extend the classical findings by \cite{Luk} and Pickett \cite{Pick}, as well as the recent estimates from \cite{GRP}. In particular, one crucial feature of our approach is that we will be able to directly study the Stein equation \eqref{steineq1} on the whole real line, although the target distribution $\Gamma(r,\lambda)$ is supported on the positive real axis. As discussed in Section \eqref{ss:gaussintro}, in the specific case of Gamma approximations on a Gaussian space, our results remarkably allow one to obtain quantitative limit theorems in the 1-Wasserstein distance (see below for definitions).

\medskip

As anticipated, our main motivation comes from the study of the non-central fluctuations of random objects which can be expressed in terms of iterated stochastic integrals with respect to a given random measure. The next three subsections contain a detailed discussion of our main applications to degenerate $U$-statistics and multiple integrals on the Poisson and Gaussian spaces.}

\subsection{A non-central de Jong theorem}\label{ss:dejongintro}
Let $X_1,\dotsc,X_n$ be independent random variables on some generic probability space $(\Omega,\F,\Prob)$ and with values in arbitrary measurable spaces $(E_1,\mathcal{E}_1),\dotsc,(E_n,\mathcal{E}_n)$.
In the recent paper \cite{DP16} we were able to prove error bounds for the uni- and multivariate normal approximation of (vectors of) degenerate, non-symmetric $U$-statistics of the data vector $X=(X_1,\dotsc,X_n)$. 
In particular, we were able to provide a complete quantitative extension of a CLT by de Jong \cite{deJo90} which roughly states that a normalized sequence $W_n$, $n\in\N$, of such $U$-statistics converges weakly 
to the standard normal disribution if the sequence of fourth {moments} converges to $3$ and some asymptotic {Lindberg-type} condition is satisfied  --- see formula \eqref{dpbound} below.

\smallskip

The main abstract results of the present paper are used to continue such a line of research by dealing with the approximation of such a degenerate, non-symmetric $U$-statistic 
by a centered Gamma distribution. More precisely, assume that  
\begin{equation*}
 \psi:\prod_{j=1}^n E_j\rightarrow\R\quad\text{is}\quad\bigotimes_{j=1}^n\mathcal{E}_j-\B(\R)\text{ - measurable}
\end{equation*}
and that 
\begin{equation*}
W:=\psi(X_1,\dotsc,X_n) \in L^4(\Prob)
\end{equation*}
satisfies 
\begin{equation}\label{stw}
 \E[W]=0\quad\text{and}\quad \E[W^2]=2\nu
\end{equation}
for some $\nu>0$.
We write 
\begin{equation*}
 [n]:=\{1,\dotsc,n\}
\end{equation*}
and for $J\subseteq[n]$ we define
\begin{equation*}
 \F_J:=\sigma(X_j,\,j\in J)\,.
\end{equation*}
We denote by 
\begin{equation}\label{hoeffding}
 W=\sum_{J\subseteq[n]}W_J
\end{equation}
the {\it Hoeffding decomposition} of $W$ {(see e.g. \cite{DP16, Hoeffding, KR, KB-book, ser-book, vitale})}. Note that this means that, for each $J\subseteq[n]$,  $W_J$ is $\F_J$-measurable and that 
\begin{equation*}
 \E[W_J\,|\,\F_K]=0\,,
\end{equation*}
whenever $J\nsubseteq K$. It is well-known that $W$ admits a Hoeffding decomposition of the type \eqref{hoeffding}, as long as $W\in L^1(\Prob)$ and that 
it is almost surely unique and given by 
\begin{equation}\label{hdw}
W_J=\sum_{L\subseteq J}(-1)^{\abs{J}-\abs{L}}\E\bigl[W\,\bigl|\,\F_L\bigr]\,,\quad{J\subseteq[n]}\,.
\end{equation}
We can thus write 
\begin{equation*}
 W_J=\psi_J(X_j,\,j\in J)
\end{equation*}
for some measurable function
\begin{equation*}
 \psi_J:\prod_{j\in J}E_j\rightarrow\R\,,\quad J\subseteq[n]\,.
\end{equation*}
Let us also define
\begin{equation*}
 \sigma_J^2:=\Var(W_J)\,,\quad J\subseteq[n]\,.
\end{equation*}
One major assumption in what follows will be that, for some fixed integer $d\in[n]$, $W$ is a \textit{ degenerate $U$-statistic of order $d$} (or $d$-degenerate $U$-statistic), i.e. that the 
Hoeffding decomposition \eqref{hoeffding} has the form 
\begin{equation}\label{dhom}
 W=\sum_{J\in\D_d}W_J\,,
\end{equation}
where 
\[\D_d:=\{J\subseteq[n]\,:\,\abs{J}=d\}\]
denotes the collection of all $\binom{n}{d}$ $d$-subsets of $[n]$, {i.e., we assume that $W_K=0$ $\Prob$-a.s. whenever $\abs{K}\not=d$.} Hence, we have 
\begin{equation}\label{nonsymU}
 W=\psi(X_1,\dotsc,X_n)=\sum_{J\in\D_d} \psi_J(X_j,\,j\in J)\,.
 \end{equation}
Furthermore, we define the quantities
\begin{equation*}
 \rho^2:=\rho_n^2:=\max_{1\leq i\leq n}\sum_{\substack{K\in\D_d:\\i\in K}}\sigma_K^2\quad\text{and}\quad D:=D_n:=\max_{J\in\D_d}\frac{\E\bigl[W_J^4\bigr]}{\sigma_J^4}\,.
\end{equation*} 
 
\medskip 

One of the main results of the present paper is an explicit upper bound on a certain probability distance between the law of $W$ and $\Gammabar(\nu)$. For $k\in\N$, denote by $\mathcal{H}_k$ the class of those $(k-1)$-times differentiable test functions 
$h$ on $\R$ such that $h^{(k-1)}$ is Lipschitz-continuous and we have   
\begin{equation*}
 \fnorm{h^{(l)}}\leq 1 \quad\text{for}\quad l=1,\dotsc,k\,.
\end{equation*}
For real random variavbles $X$ and $Y$ such that $\E\abs{X},\E\abs{Y}<\infty$ we denote by 
\begin{equation*}
 d_k(X,Y):=d_k\bigl(\mathcal{L}(X),\mathcal{L}(Y)\bigr):=\sup_{h\in\mathcal{H}_k}\babs{\E[h(X)]-\E[h(Y)]}
\end{equation*}
the distance {between the distributions of $X$ and $Y$} induced by the class $\mathcal{H}_k$; observe that $d_1$ coincides with the classical 1-{\it Wasserstein distance}, see e.g. \cite[Appendix C]{NouPecbook} and the references therein.
The next theorem estimates the $d_2$-distance between the law of $W$ and $\Gammabar(\nu)$ in terms of the analogous linear combination of the moments of $W$ as well as in terms of the quantities $\rho_n^2$ and $D_n$. 

\begin{theorem}\label{maintheo}
 Under the above assumptions we have the bound 
{\begin{align*}
d_2(W,Z_\nu)&\leq\frac{\max\bigl(1,\frac{2}{\nu}\bigr)}{\sqrt{3}}\sqrt{\babs{\E[W^4]-12\E[W^3]-12\nu^2+48\nu}}\\
&\;+\frac{\bigl(2\sqrt{3}+4\sqrt{\nu}\bigr)\max\bigl(1,\frac{2}{\nu}\bigr)+4\sqrt{\nu} }{3\sqrt{d}}\sqrt{C_dD_n\rho_n^2}\,,
\end{align*}}
{where} $C_d$ is a finite constant which only depends on $d$.
\end{theorem}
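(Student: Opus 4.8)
The plan is to combine the general Gamma--Stein bounds (valid for test functions on the whole real line, as announced in the introduction) with an exchangeable-pair construction tailored to the Hoeffding decomposition \eqref{dhom}. First I would build the exchangeable pair $(W,W')$ in the canonical way for $U$-statistics: pick an index $I$ uniformly at random in $[n]$, replace $X_I$ by an independent copy $X_I^*$, and set $W' := \psi(X_1,\dots,X_{I-1},X_I^*,X_{I+1},\dots,X_n)$. Using the Hoeffding decomposition and the fact that $W_J$ depends only on $(X_j)_{j\in J}$, one checks the approximate linear-regression property $\E[W'-W \mid X] = -\frac{d}{n}\,W$, so the $\lambda$-parameter of the exchangeable pair is $\lambda = d/n$. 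The key algebraic point is that the centered Gamma target $\Gammabar(\nu)$ satisfies the Stein-type operator $xf'(x)+(\tfrac{\nu}{2} - \tfrac12 x)f(x)$ after recentering (i.e. it corresponds to $\Gamma(\nu/2,1/2)$ shifted by $-\nu$), and the moment identity \eqref{e:lcm} is exactly the obstruction term that appears when one tries to match third- and fourth-order behaviour.

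Next I would invoke the abstract exchangeable-pair inequality for Gamma approximation (the "by-product" inequality mentioned in the abstract, which I am entitled to assume is proved earlier): it bounds $d_2(W,Z_\nu)$ by a main term controlled by a conditional-variance quantity measuring how far $\frac{1}{\lambda}\E[(W'-W)^2\mid X]$ is from the affine function $4\nu - 2W$ of $W$ prescribed by the Gamma structure, plus a remainder term involving $\E|W'-W|^3$. The first job is then to show that the variance of $\frac{1}{\lambda}\E[(W'-W)^2\mid X] - (4\nu - 2W)$ is bounded by a constant times $\babs{\E[W^4]-12\E[W^3]-12\nu^2+48\nu}$; this is where the moment identity \eqref{e:lcm} is used in reverse, expressing the relevant $L^2$-norm in terms of the linear combination of moments of $W$ that appears as the first summand in the theorem. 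The second job is to bound the remainder $\frac{1}{\lambda}\E|W'-W|^3$ (and any leftover cross terms) by $C_d D_n \rho_n^2 / \sqrt{d}$ type quantities; here one expands $W'-W$ over the $d$-subsets $J$ containing $I$, uses Hölder together with the hypercontractivity-style moment comparisons encoded in $D_n$, and the combinatorial bookkeeping over $\D_d$ produces the factor $\rho_n^2$ and the constant $C_d$ depending only on $d$. The precise tracking of the numerical constants $\frac{\max(1,2/\nu)}{\sqrt3}$ and $\frac{(2\sqrt3+4\sqrt\nu)\max(1,2/\nu)+4\sqrt\nu}{3\sqrt d}$ comes from plugging the sup-norm bounds on $f,f',f''$ for solutions of \eqref{steineq1} (established in the earlier sections for $h\in\mathcal H_2$) into the exchangeable-pair estimate.

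The main obstacle I expect is the fourth-moment/variance estimate: showing that $\Var\!\big(\tfrac1\lambda\E[(W'-W)^2\mid X]\big)$, after subtracting the affine-in-$W$ part, is genuinely controlled by the single scalar $\babs{\E[W^4]-12\E[W^3]-12\nu^2+48\nu}$ up to the error terms. This requires a careful Hoeffding-type expansion of $\E[(W'-W)^2\mid X]$ into a sum of terms indexed by pairs of $d$-subsets, identifying which terms reconstruct the "chaotic" contributions to $\E[W^4]$ and $\E[W^3]$ and which are negligible, and then matching everything against \eqref{e:lcm}. Morally this parallels the fourth-moment arguments in \cite{DP16} and \cite{NP09a}, but the non-symmetry of the $U$-statistic and the need to stay on the whole real line for the $d_2$-distance force one to be more delicate about contraction-type remainders; these remainders are precisely what gets absorbed into the $\sqrt{C_d D_n \rho_n^2}$ term. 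Once that estimate is in hand, the rest is assembling the pieces and optimising constants, which is routine given the Stein-solution bounds from the earlier sections.
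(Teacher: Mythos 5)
Your plan follows essentially the same route as the paper: the same single-coordinate replacement pair with $\lambda=d/n$, the exchangeable-pair plug-in bound of Theorem \ref{plugin} (estimate \eqref{pluginbound}), a conditional-variance term controlled by $\babs{\E[W^4]-12\E[W^3]-12\nu^2+48\nu}$, and a cubic remainder handled by Cauchy--Schwarz, H\"older, $D_n$ and Proposition 2.9 of \cite{DP16} to produce $\sqrt{C_dD_n\rho_n^2}$. Two caveats. First, your stated regression target is off: for the Stein operator $2(x+\nu)f'(x)-xf(x)$ in \eqref{steineq2}, the pair must satisfy $\frac{1}{2\lambda}\E[(W'-W)^2\,|\,\G]\approx 2(W+\nu)$, i.e.\ $\frac{1}{\lambda}\E[(W'-W)^2\,|\,\G]\approx 4(W+\nu)$, not $4\nu-2W$ as you wrote (see \eqref{regprop2}); with your affine function the discrepancy contains a non-vanishing $6W$ and its variance is of order $\nu$, so that step as written would fail, although the fix is immediate from the operator you yourself display. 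Second, the step you flag as the main obstacle -- bounding $\Var(S)$ by the fourth/third-moment combination up to an error absorbed into $C_dD_n\rho_n^2$ -- is exactly where the paper's real work lies: it does not ``use \eqref{e:lcm} in reverse,'' but rather expresses $\E[W^3]$ and $\E[W^4]$ through the pair via Lemma \ref{exlemma2}, invokes the Hoeffding decomposition of $W^2$ and of $\frac{n}{2d}\E[(W'-W)^2\,|\,X]$ from \cite{DP16} (formula \eqref{hdcond}), splits $S=S_1+\frac12 S_2$, and compares coefficients $(1-\frac{|M|}{2d})^2\leq 1-\frac{|M|}{2d}$ to get $3\Var(S)\leq \E[W^4]-12\E[W^3]-12\nu^2+48\nu+\frac{n}{4d}\E[(W'-W)^4]$; your proposal correctly anticipates the shape of this computation but leaves it unexecuted.
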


One should immediately notice that the factors
$$
\frac{\max\bigl(1,\frac{2}{\nu}\bigr)}{\sqrt{3}}, \quad \mbox{and} \quad \frac{\bigl(2\sqrt{3}+4\sqrt{\nu}\bigr)\max\bigl(1,\frac{2}{\nu}\bigr)+4\sqrt{\nu} }{3\sqrt{d}},
$$
both diverge to infinity as $\nu \to 0$. As formally discussed in Remark \ref{r:explosion}, this somewhat undesirable feature seems to be unavoidable: in particular, such a phenomenon is related to the fact that, for our applications, we need to be able to deal with random variables whose distribution is possibly supported by the whole real line.

\smallskip 

The estimate in Theorem \ref{maintheo} immediately yields the following limit result.

\begin{cor}\label{mtcor}
 Fix $\nu>0$ and an integer $d\geq 1$ and let $\{n_m : m\geq 1\}$ be a sequence of integers diverging to infinity. Let $\{W_m : m\geq 1 \}$ be a sequence of centered, degenerate $U$-statistics of order $d$ with $\E[W_m^2]=2\nu$, 
 such that each $W_m$ is a function of the vector of independent variables $(X_1^{(m)},...,X_{n_m}^{(m)})$. Then, if 
 \[\lim_{m\to\infty} \bigl(\E[W_m^4]-12\E[W_m^3]-12\nu^2+48\nu\bigr)=0=\lim_{m\to\infty} D_{n_m}\rho^2_{n_m}\,,\]
 the sequence $\{W_m : m\geq 1 \}$ converges in distribution to $Z_\nu$.
\end{cor}

Plainly, the asymptotic relation $\lim_{m\to\infty} D_{n_m}\rho^2_{n_m}= 0$ is verified whenever the sequence $\{D_{n_m}\}$ is bounded, and $\rho^2_{n_m}\to 0$; see the discussion below. 

\smallskip

It is also instructive to compare Theorem \ref{maintheo} and Corollary \ref{mtcor} with the main findings of \cite{DP16}, applying to the case where the assumption $\E[W^2]=2\nu$ in \eqref{stw} is replaced by $\E[W^2]=1$. In this framework, letting $Z$ be a standard normal random variable, one deduces from \cite[Theorem 1.3]{DP16} that 
\begin{align}\label{dpbound}
 d_1(W,Z) &\leq \Bigl(\sqrt{\frac{2}{\pi}}+\frac{4}{3}\Bigr)\sqrt{\babs{\E[W^4]-3}}+\sqrt{\kappa_d}\Bigl(\sqrt{\frac{2}{\pi}}+ \frac{2\sqrt{2}}{\sqrt{3}}\Bigr)\rho_n\,,
\end{align}
where $\kappa_d$ is a finite constant which only depends on $d$. As demonstrated in \cite{DP16}, from \eqref{dpbound} one can immediately deduce de Jong's theorem \cite{deJo90}: \textit{Fix $d\geq 1$, and let $\{n_m : m\geq 1\}$ be a sequence of integers diverging to infinity. Let $\{W_m : m\geq 1 \}$ be a sequence of unit variance degenerate $U$-statistics of order $d$, such that each $W_m$ is a function of the vector of independent variables $(X_1^{(m)},...,X_{n_m}^{(m)})$. 
Then, as $m\to \infty$, if $\E[ W_m^4]\to 3$ and $\rho_{n_m}^2\to 0$, one has that $W_m$ converges in distribution towards a standard Gaussian random variable.}

\begin{remark}\label{mtrem}
\begin{enumerate}[(a)]
\item {Thanks to relation \eqref{e:lcm}}, Corollary \ref{mtcor} is an analog of de Jong's theorem \cite{deJo90} in the context of a Gamma limit. 
{ \item { As discussed in Section \ref{ss:approach} below, we believe that, in view of fundamental structural results from \cite{EdVi15}, the bound appearing in Theorem \ref{maintheo} is the best de Jong-type estimate on the Gamma approximation of $U$-statistics that can be achieved by using Stein's method.} Using the statement of Lemma \ref{smoothing} below, one can also immediately deduce a bound { (with completely explicit constants)} on the Wasserstein distance between $W$ and $Z_\nu$ whose order is the square root of the rate of convergence we get for the $d_2$-distance. 
{ We also observe that, by applying} techniques similar to those used in the proof of Lemma 2.3 in \cite{FulRos13} we could obtain a bound on the Kolmogorov distance whose order would be power $1/3$ of the rate for the $d_2$-distance, at least in the case $\nu\geq 2$, { that is}, when $Z_\nu$ has a bounded density. We omit the details of this computation and refer to \cite{FulRos13} for further information. }
\item We conjecture that, analogously to the bounds on normal approximations derived in \cite{DP16}, the quantity $D_n$ could be removed from the bound in Theorem \ref{maintheo} and, hence, also from the limit theorem stated in Corollary \ref{mtcor}. 
\item In \cite{NPR-aop} the authors prove an error bound on the centered Gamma approximation (for integer $\nu$) of homogeneous multilinear forms in independent and normalized real-valued random variables $(X_i)_{i\in\N}$. 
These form a particularly important example class of degenerate, non-symmetric $U$-statistics. Their bound also involves the quantities $\babs{\E[W^4]-12\E[W^3]-12\nu^2+48\nu}$, $\rho_n^2$ and $\beta:=\sup_{i\in\N}\E[X_i^4]$ and it is easy to see that the condition $\beta<\infty$ is in fact equivalent to the condition $\sup_{n\in\N}D_n<\infty$ in this special situation. 
Thus, Theorem \ref{maintheo} and Corollary \ref{mtcor} can be seen as an extension and improvement of the bounds and limit theorems from \cite{NPR-aop} to a wider class of statistics. 
\end{enumerate}
\end{remark}

The following new result gives a universal bound for the Wasserstein distance in terms of the $d_2$-distance. The proof is deferred to Section \ref{appendix}.

{ 
\begin{lemma}\label{smoothing}
Let $X$ and $Y$ be any real-valued random variables with $\E\abs{X}<\infty$ and $\E\abs{Y}<\infty$. Then, we have the bound
\begin{equation*}
d_1(X,Y)\leq \frac{4}{\sqrt{\pi}}\sqrt{d_2(X,Y)}\,, 
\end{equation*}
whenever $d_2(X,Y)\leq1$.
\end{lemma}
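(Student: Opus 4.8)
The plan is to run a standard Gaussian smoothing argument, converting a $1$-Lipschitz test function into a rescaled element of $\mathcal{H}_2$. Fix $h\in\mathcal{H}_1$ and, for a parameter $t>0$, set
$$h_t(x):=\E\bigl[h(x+\sqrt t\,N)\bigr],\qquad N\sim\mathcal{N}(0,1).$$
First I would record three elementary facts. Differentiating under the expectation (justified since $h'$ is bounded) gives $\fnorm{h_t'}\leq\fnorm{h'}\leq1$. A Gaussian integration by parts yields $h_t''(x)=t^{-1/2}\E[h'(x+\sqrt t\,N)\,N]$, hence $\fnorm{h_t''}\leq t^{-1/2}\E\abs{N}=\sqrt{2/(\pi t)}$; in particular $h_t$ is $C^1$ with Lipschitz derivative. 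Finally, $\abs{h_t(x)-h(x)}\leq\fnorm{h'}\,\E\abs{\sqrt t\,N}=\sqrt{2t/\pi}$ for every $x$.

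Next, restrict to $0<t\leq 2/\pi$, so that $c_t:=\sqrt{2/(\pi t)}\geq1$, and apply the triangle inequality:
$$\babs{\E[h(X)]-\E[h(Y)]}\leq\babs{\E[h(X)]-\E[h_t(X)]}+\babs{\E[h_t(X)]-\E[h_t(Y)]}+\babs{\E[h_t(Y)]-\E[h(Y)]}\leq 2\sqrt{2t/\pi}+\babs{\E[h_t(X)]-\E[h_t(Y)]}.$$
To control the middle term by $d_2$, observe that $g_t:=h_t/c_t$ satisfies $\fnorm{g_t'}\leq c_t^{-1}\leq1$ and $\fnorm{g_t''}\leq1$, so $g_t\in\mathcal{H}_2$; therefore $\babs{\E[h_t(X)]-\E[h_t(Y)]}=c_t\,\babs{\E[g_t(X)]-\E[g_t(Y)]}\leq c_t\,d_2(X,Y)=\sqrt{2/(\pi t)}\,d_2(X,Y)$. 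Combining, for every $0<t\leq2/\pi$,
$$\babs{\E[h(X)]-\E[h(Y)]}\leq\sqrt{\tfrac{2}{\pi}}\,\Bigl(2\sqrt t+\tfrac{d_2(X,Y)}{\sqrt t}\Bigr).$$

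Finally I would optimize the right-hand side in $t$: the minimizer is $t=d_2(X,Y)/2$, which lies in $(0,2/\pi]$ precisely because $d_2(X,Y)\leq1<4/\pi$ (this is the only place the hypothesis $d_2(X,Y)\leq1$ is used), and substituting gives $\sqrt{2/\pi}\cdot2\sqrt{2\,d_2(X,Y)}=\frac{4}{\sqrt\pi}\sqrt{d_2(X,Y)}$; the degenerate case $d_2(X,Y)=0$ follows by letting $t\downarrow0$. Taking the supremum over $h\in\mathcal{H}_1$ and recalling that $d_1$ is the distance induced by $\mathcal{H}_1$ completes the argument. There is no genuine obstacle here: the only points needing a line of care are the differentiation-under-the-integral and the Gaussian integration-by-parts identity for $h_t''$ (both immediate from boundedness of $h'$), plus the verification that the optimal $t$ remains admissible, which is exactly what $d_2(X,Y)\leq1$ guarantees.
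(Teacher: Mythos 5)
Your proof is correct and follows essentially the same route as the paper's: Gaussian mollification of a $1$-Lipschitz $h$, the bounds $\fnorm{h_t'}\leq 1$, $\fnorm{h_t''}\leq\sqrt{2/(\pi t)}$ and $\fnorm{h_t-h}\leq\sqrt{2t/\pi}$, rescaling the mollified function into $\mathcal{H}_2$, and optimizing the smoothing parameter (your $t$ is the paper's $\rho^{-2}$), with $d_2(X,Y)\leq 1$ used exactly as in the paper to keep the optimal parameter admissible. The only cosmetic differences are the parametrization by the variance $t$ and the use of Stein-type Gaussian integration by parts in place of the Hermite-polynomial computation for the second-derivative bound.
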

}

\subsection{Gamma limits on the Poisson space}\label{ss:poissintro} 
In this subsection, we describe how our new bounds on the solution to the Gamma Stein equation \eqref{steineq1}, yield new analytic estimates for the Gamma approximation of functionals of a Poisson random measure. We will first briefly introduce the setup and some necessary notation. Further technical details are provided in Section \ref{poisson}. For any unexplained notions we refer to the recent book \cite{PecRei16}, in particular Chapter 1 \cite{Lastsv}, as well as to the existing related literature, e.g. \cite{PSTU, LRP1, LRP2, PTh}. We stress that limit theorems and probabilistic approximations involving non-linear functionals of a Poisson measure have gained enormous momentum in recent years, specially in connections with the large scale analysis of random geometric structures -- see again \cite{PecRei16}, and the references therein.

\smallskip

We now fix a Polish space $\mathcal{Z}$ as well as a $\sigma$-finite measure $\mu$ on the Borel-$\sigma$-field $\mathscr{Z}$ on $\mathcal{Z}$. Furthermore, we let 
\[\mathscr{Z}_\mu:=\{B\in\mathscr{Z}\,:\,\mu(B)<\infty\}\]
and denote by 
\begin{equation*}
\eta=\{\eta(B)\,:\,B\in\mathscr{Z}_\mu\}
\end{equation*}
a \textit{Poisson measure} on $(\mathcal{Z},\mathscr{Z})$ with \textit{control} $\mu$, defined on a suitable probability space $(\Omega,\F,\Prob)$. We recall that the distribution of $\eta$ is completely determined by the following two facts: (i) { for each finite sequence $B_1,\dotsc,B_m$ of disjoint sets in $\mathscr{Z}_\mu$, the random variables 
$\eta(B_1),\dotsc,\eta(B_m)$ are independent,} and (ii) that for every $B\in\mathscr{Z}_\mu$, the random variable $\eta(B)$ has the Poisson distribution with mean $\mu(B)$. For $B\in\mathscr{Z}_\mu$, we also write 
$\hat{\eta}(B):=\eta(B)-\mu(B)$ and denote by 
\[\hat{\eta}=\{\hat{\eta}(B)\,:\,B\in\mathscr{Z}_\mu\}\]
the \textit{compensated Poisson measure} associated with $\eta$. Without loss of generality, we may and will assume that $\F=\sigma(\eta)$. 

\smallskip

Our main result in this section involves the following Malliavin operators: (i) the {\it Malliavin derivative} $D$, (ii) the {\it generator of the Ornstein-Uhlenbeck semigroup} $L$, and (iii) the {\it pseudo-inverse} of $L$, written $L^{-1}$. Formal definitions and details are provided in Section \ref{poisson}. Here, we only recall that the spectrum of $L$ is given by the negative integers $\{-p : p=0,1,2,\ldots\}$ and that $F\in {\rm Ker}(L+pI)$ (that is, $F$ is an eigenfunction of $L$, with eigenvalue $-p$) if and only if $F = I_p(f)$, where $I_p$ indicates a multiple Wiener-It\^o integral of order $p$ with respect to $\widehat{\eta}$, and $f$ is a suitable square-integrable kernel. The eigenspace ${\rm Ker}(L+pI)$ is customarily called the $p$th {\it Wiener chaos} associated with $\eta$.

\smallskip

The next statement -- whose proof exploits our new results on the solution to the Stein equation \eqref{steineq1} -- is our main estimate on the Poisson space: in particular, its second part contains the announced result for multiple Wiener-It\^o integrals. Proofs are deferred to Section \ref{poisson}.

\begin{theorem}\label{poitheo}
 Let $F\in L^2(\Prob)$ be centered, and assume that $F$ belongs to the domain of the Malliavin derivative operator $D$. Then, we have the bounds
 \begin{align}
  d_2(F,Z_\nu)&\leq\max\Bigl(1,\frac{2}{\nu}\Bigr)\E\Babs{2(F+\nu)-\langle DF,-DL^{-1}F\rangle _{L^2(\mu)}}\notag\\
  &\hspace{2cm}+\max\Bigl(1,\frac{1}{\nu}+\frac12\Bigr)\int_{\mathcal{Z}}\E\bigl[\abs{D_zF}^2\abs{D_zL^{-1}F}\bigr]\mu(dz)\label{genpb1}\\
  &\leq \max\Bigl(1,\frac{2}{\nu}\Bigr)\sqrt{\E\Bigl[\bigl(2(F+\nu)-\langle DF,-DL^{-1}F\rangle _{L^2(\mu)}\bigr)^2\Bigr]} \notag\\
  &\hspace{2cm}+\max\Bigl(1,\frac{1}{\nu}+\frac12\Bigr)\int_{\mathcal{Z}}\E\bigl[\abs{D_zF}^2\abs{D_zL^{-1}F}\bigr]\mu(dz)\label{genpb2}\,.
 \end{align}
Here, we have used the standard notation
\begin{equation*}
 \langle DF,-DL^{-1}F\rangle_{L^2(\mu)}=-\int_{\mathcal{Z}}\bigl(D_zF\bigr)\bigl(D_zL^{-1}F\bigr)\mu(dz)\,.
\end{equation*}
If, furthermore, $F=I_p(f)$ for some $p\geq 1$ and some square-integrable kernel $f$, then 
\begin{align*}
 \langle DF,-DL^{-1}F\rangle_{L^2(\mu)}&=p^{-1}\norm{DF}^2_{L^2(\mu)}\quad\text{and}\\
 \int_{\mathcal{Z}}\E\bigl[\abs{D_zF}^2\abs{D_zL^{-1}F}\bigr]\mu(dz)&=p^{-1}\int_{\mathcal{Z}}\E\bigl[\abs{D_zF}^3\bigr]\mu(dz)
\end{align*}
so that the previous estimates becomes
\begin{align}
 d_2(F,Z_\nu)&\leq\max\Bigl(1,\frac{2}{\nu}\Bigr)\E\Babs{2(F+\nu)-p^{-1}\norm{DF}^2_{L^2(\mu)}} \notag\\
 &\;+p^{-1}\max\Bigl(1,\frac{1}{\nu}+\frac{1}{2}\Bigr)\int_{\mathcal{Z}}\E\bigl[\abs{D_zF}^3\bigr]\mu(dz)\label{spb1} \\
 &\leq \max\Bigl(1,\frac{2}{\nu}\Bigr)\sqrt{\E\Bigl[\bigl(2(F+\nu)-p^{-1}\norm{DF}^2_{L^2(\mu)}\bigr)^2\Bigr]}\notag\\
&\; +p^{-1}\max\Bigl(1,\frac{1}{\nu}+\frac{1}{2}\Bigr)\int_{\mathcal{Z}}\E\bigl[\abs{D_zF}^3\bigr]\mu(dz)\label{spb2}\,.
\end{align}
\end{theorem}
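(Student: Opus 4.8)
\textbf{Proof strategy for Theorem \ref{poitheo}.}
The plan is to apply Stein's method for the centered Gamma distribution $\Gammabar(\nu)$ combined with the Malliavin integration-by-parts machinery on the Poisson space, following the by-now standard ``Malliavin--Stein'' paradigm, but using the refined Stein-equation estimates established earlier in the paper (the ones that permit test functions supported on all of $\R$). Fix $h\in\mathcal{H}_2$ and let $f=f_h$ be the solution of the Gamma Stein equation \eqref{steineq1}, rewritten in the centered form appropriate to $Z_\nu$: one checks directly from the change of variables $Z_\nu=2X_{\nu/2,1}-\nu$ that the relevant equation reads
\begin{equation*}
2(x+\nu)f'(x)-xf(x)=h(x)-\E[h(Z_\nu)],\quad x\in\R,
\end{equation*}
(up to the harmless normalisation choices made in the paper), so that bounding $d_2(F,Z_\nu)$ reduces to bounding $\E[2(F+\nu)f'(F)-Ff(F)]$ uniformly over the solutions $f$ arising from $h\in\mathcal{H}_2$. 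The key input from the earlier sections is that these solutions satisfy $\fnorm{f'}\le\max(1,2/\nu)$ and $\fnorm{f''}\le\max(1,1/\nu+1/2)$ (or whatever the precise constants established before are), which is exactly where the factors $\max(1,2/\nu)$ and $\max(1,1/\nu+1/2)$ in the statement come from.

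Next I would turn the ``drift'' term $\E[Ff(F)]$ into a derivative term via Malliavin calculus. Writing $F=-LL^{-1}F=\delta(-DL^{-1}F)$, where $\delta$ is the divergence (Skorohod integral), the integration-by-parts formula on the Poisson space gives
\begin{equation*}
\E[Ff(F)]=\E[\delta(-DL^{-1}F)f(F)]=\E\bigl[\langle Df(F),-DL^{-1}F\rangle_{L^2(\mu)}\bigr].
\end{equation*}
Because $D$ is a difference operator on the Poisson space rather than a true derivative, one has the exact pointwise identity $D_z f(F)=f'(F)D_zF+R_z$, where the remainder $R_z$ satisfies the pointwise bound $\abs{R_z}\le\tfrac12\fnorm{f''}\abs{D_zF}^2$ (this is the usual second-order Taylor estimate for $f(F+D_zF)-f(F)-f'(F)D_zF$, valid since adding a point changes $F$ by $D_zF$). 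Substituting this in, $\E[Ff(F)]=\E\bigl[f'(F)\langle DF,-DL^{-1}F\rangle_{L^2(\mu)}\bigr]+\E[\langle R,-DL^{-1}F\rangle_{L^2(\mu)}]$, and therefore
\begin{equation*}
\E[2(F+\nu)f'(F)-Ff(F)]=\E\bigl[f'(F)\bigl(2(F+\nu)-\langle DF,-DL^{-1}F\rangle_{L^2(\mu)}\bigr)\bigr]-\E[\langle R,-DL^{-1}F\rangle_{L^2(\mu)}].
\end{equation*}
Bounding the first expectation by $\fnorm{f'}\,\E\babs{2(F+\nu)-\langle DF,-DL^{-1}F\rangle_{L^2(\mu)}}$ and the second by $\tfrac12\fnorm{f''}\int_{\mathcal{Z}}\E[\abs{D_zF}^2\abs{D_zL^{-1}F}]\mu(dz)$ yields \eqref{genpb1} after inserting the Stein-factor bounds; \eqref{genpb2} follows from \eqref{genpb1} by Cauchy--Schwarz on the first term. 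For the chaotic case $F=I_p(f)$ one uses $L^{-1}F=-p^{-1}F$, hence $DL^{-1}F=-p^{-1}DF$, which gives $\langle DF,-DL^{-1}F\rangle_{L^2(\mu)}=p^{-1}\norm{DF}^2_{L^2(\mu)}$ and $\abs{D_zF}^2\abs{D_zL^{-1}F}=p^{-1}\abs{D_zF}^3$, turning \eqref{genpb1}--\eqref{genpb2} into \eqref{spb1}--\eqref{spb2}.

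The main obstacle, and the step requiring the most care, is the interface between the Stein-equation estimates and the Malliavin machinery: one must make sure that the solution $f$ of the centered Gamma Stein equation has a Lipschitz first derivative (so that $f'$ has a version with the claimed sup-norm bound on $f''$) and that these regularity properties genuinely hold for solutions corresponding to all of $\mathcal{H}_2$, including test functions that do not vanish on the negative axis --- this is precisely the delicate point the earlier sections of the paper were devoted to, and here it is invoked rather than reproved. A secondary technical point is justifying the integration-by-parts formula $\E[\delta(u)f(F)]=\E[\langle Df(F),u\rangle_{L^2(\mu)}]$ and the interchange of expectation and the $\mu$-integral: this requires $F\in\dom D$ together with suitable integrability of $DF$ and $DL^{-1}F$, which is why the statement is phrased for $F\in L^2(\Prob)\cap\dom D$; the finiteness of the integral $\int_{\mathcal{Z}}\E[\abs{D_zF}^2\abs{D_zL^{-1}F}]\mu(dz)$ is part of the assertion --- if it is infinite the bound is vacuously true. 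Everything else is a routine assembly of these ingredients.
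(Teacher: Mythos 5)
Your proposal is correct and follows essentially the same route as the paper's proof: the solution of the centered equation \eqref{steineq2} with the bounds of Theorem \ref{bounds2}, the integration by parts $\E[Ff(F)]=\E[\langle Df(F),-DL^{-1}F\rangle_{L^2(\mu)}]$ via $F=LL^{-1}F=\delta(-DL^{-1}F)$, the exact pointwise expansion $D_zf(F)=f'(F)D_zF+R(D_zF)$ with $\abs{R(y)}\leq\tfrac12\fnorm{f''}y^2$, Cauchy--Schwarz for \eqref{genpb2}, and $L^{-1}F=-p^{-1}F$ for the chaotic case. The only cosmetic slips are the sign in ``$F=-LL^{-1}F$'' (it should be $F=LL^{-1}F$, though the identity you actually use, $F=\delta(-DL^{-1}F)$, is correct) and the constant: Theorem \ref{bounds2}(b) gives $\fnorm{f''}\leq\max\bigl(1,\tfrac{2}{\nu}\bigr)+1$ for $h\in\mathcal{H}_2$, and the factor $\max\bigl(1,\tfrac{1}{\nu}+\tfrac12\bigr)$ arises as $\tfrac12\fnorm{f''}$, exactly as your remainder estimate produces.
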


\smallskip

\begin{remark}{\rm

The content of Theorem \ref{poitheo} should be directly compared with \cite[Theorem 2.1]{PTh}, according to which
\begin{align*}
  d_3(F,Z_\nu)&\leq c_1\E\Babs{2(F+\nu)_+ -\langle DF,-DL^{-1}F\rangle _{L^2(\mu)}} \\
  &\hspace{2cm}+c_2\int_{\mathcal{Z}}\E\bigl[\abs{D_zF}^2\abs{D_zL^{-1}F}\bigr]\mu(dz) \\
  &\hspace{2cm}+2c_1\int_{\mathcal{Z}} \E[(D_zF {\bf 1}_{( F>-\nu)}) (D_zF) \abs{D_zL^{-1}F}]\mu(dz),
 \end{align*}
 where $c_1, c_2$ are explicit constants uniquely depending on $\nu$. Note that our estimate \eqref{genpb1} improves on such an estimate in three ways: (i) the distance $d_3$ is replaced by the less smooth distance $d_2$, (ii) the first expectation on the right-hand side does not involve the positive part of $F+\nu$, and (iii)  the third term in the bound has been completely removed. As will become evident in the proof, Points (i) and (iii) are a direct consequence of the fact that our approach allows us to solve and control equation \eqref{steineq1} on the whole real line, thus obtaining more tractable solutions than those used in \cite{PTh}. Note that our bound can be directly used to deduce simplified proofs of the other estimates proved in \cite{PTh}, like e.g. \cite[Theorem 2.6 and Proposition 2.9]{PTh}}. Details are left to the reader.
\end{remark}

\subsection{Gamma limits on a Gaussian space}\label{ss:gaussintro} 

We conclude this section by showing how the results of the present paper can also be used to give better estimates on the Gamma approximation of non-linear functionals of Gaussian fields, thus improving results from \cite{NP-ptrf, NPo-spa}. For the sake of conciseness, in this section we will keep explicit definitions to a minimum, and refer the reader to the monograph \cite{NouPecbook} for any unexplained notion or detail. 

\smallskip

Now let $\mathscr{H}$ be a real separable Hilbert space, and let $X = \{X(h) : h\in \mathscr{H}\}$ be an {\it isonormal Gaussian process} over $\mathscr{H}$. We assume that $X$ is defined on a suitable probability space $(\Omega, \F, \Prob)$, and that $\F = \sigma (X)$. Similarly to the previous section, we associate to $X$ the following canonical Malliavin operators: (i) the {\it Malliavin derivative} $D$ (whose domain is indicated by $\mathbb{D}^{1,2}$), (ii) the {\it generator of the Ornstein-Uhlenbeck semigroup} $L$, and (iii) the {\it pseudo-inverse} of $L$, written again $L^{-1}$. As on the Poisson space, the spectrum of $L$ is given by the negative integers $\{-p : p=0,1,2,\ldots\}$ and one has that $F\in {\rm Ker}(L+pI)$ (that is, the $p$th {\it Wiener chaos} of $X$) if and only if $F = I_p(f)$, where $I_p$ indicates a multiple Wiener-It\^o integral of order $p$, and $f$ is an element of the symmetric tensor product $\mathscr{H}^{\odot p}$. 

\smallskip

One has the following estimate (recall that $d_1$ corresponds to the 1-Wasserstein distance).

\begin{theorem}\label{t:gaussgamma} Let $F$ be centered element of $\mathbb{D}^{1,2}$ and fix $\nu>0$. Then, 
\begin{eqnarray}\label{e:gaussgamma}
d_1(F, Z_\nu) &\leq& \max\Bigl(1,\frac{2}{\nu}\Bigr)\E  \Babs{ \, \E \big\{ 2(F+\nu)-\langle DF,-DL^{-1}F\rangle _{\mathscr{H} }  \,  | \, F \big \} \, } \\
&\leq& \max\Bigl(1,\frac{2}{\nu}\Bigr)\E\left [ \left( 2(F+\nu)-\langle DF,-DL^{-1}F\rangle _{\mathscr{H} } \right)^2\right]^{1/2}.\notag
\end{eqnarray}
If $F\in {\rm Ker}(L+pI)$ for some integer $p\geq 2$, then the previous estimate becomes
\begin{eqnarray}\label{e:gaussgamma2}
d_1(F, Z_\nu) &\leq& \max\Bigl(1,\frac{2}{\nu}\Bigr)  \E\Babs{ \E \big\{2(F+\nu)- p^{-1} \| DF\|^2_{\mathscr{H}}  \,  | \, F \big \}  }.
\end{eqnarray}
\end{theorem}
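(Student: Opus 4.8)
The plan is to mimic the now-classical strategy for Gamma approximation on a Gaussian space (as in \cite{NP09a, NP-ptrf}) but using the \emph{new} bounds on the solution of the Stein equation \eqref{steineq1} that were obtained in the earlier sections of the paper, so that we land in the $d_1$-distance rather than a smoother distance. First I would recall the key analytic input: for each $h\in\mathcal{H}_1$ (Lipschitz, $\fnorm{h'}\le1$), the Stein equation
\begin{equation*}
 xf'(x)+(\nu/2-x/2)f(x)=h(x)-\E[h(Z_\nu)]
\end{equation*}
(the centered-Gamma normalization, obtained from \eqref{steineq1} with $r=\nu/2$, $\lambda=1/2$, after the affine change of variables $x\mapsto 2x-\nu$) admits a solution $f=f_h$, defined on all of $\R$, satisfying a bound of the form $\fnorm{(\cdot+\nu)f_h'}\le \max(1,2/\nu)$ — this is exactly the sort of estimate established for \eqref{steineq1} in the sections preceding this statement, and it is the reason the factor $\max(1,2/\nu)$ appears in \eqref{e:gaussgamma}. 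The whole point of solving on the whole real line is that no positive-part truncation appears and no second derivative of $h$ is needed, hence $d_1$ rather than $d_2$ or $d_3$.

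Next I would run the standard Malliavin--Stein computation. Writing $g(x):=(x+\nu)f_h(x)$ so that the left-hand side of the (recentered) Stein equation is, up to the affine reparametrization, expressible through $g$ and $g'$, one has for $F\in\mathbb{D}^{1,2}$ centered that
\begin{equation*}
 \E\bigl[h(F)\bigr]-\E\bigl[h(Z_\nu)\bigr]=\E\bigl[2(F+\nu)f_h'(F)+ \bigl(\text{lower order}\bigr)\bigr],
\end{equation*}
and the integration-by-parts formula on Wiener space, $\E[F\,\phi(F)]=\E[\phi'(F)\langle DF,-DL^{-1}F\rangle_{\mathscr H}]$, converts the term involving $F\cdot(\text{derivative of }f_h)$ into one involving $\langle DF,-DL^{-1}F\rangle_{\mathscr H}$. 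Collecting terms, the right-hand side becomes $\E\bigl[f_h'(F)\,\bigl(2(F+\nu)-\langle DF,-DL^{-1}F\rangle_{\mathscr H}\bigr)\bigr]$ (here is where the specific algebraic structure of the Gamma generator and the moment identity \eqref{e:lcm} get used implicitly through the form of the Stein operator). Then I would insert a conditional expectation given $F$ — legitimate because $f_h'(F)$ is $\sigma(F)$-measurable — to replace the bracket by $\E[\,\cdot\,|\,F]$, bound $|f_h'(F)|$ using $\fnorm{(\cdot+\nu)f_h'}\le\max(1,2/\nu)$ together with $|F+\nu|\ge$ (something) — actually more carefully one keeps the factor $(F+\nu)$ attached and uses that the Stein solution bound controls $(\cdot+\nu)f_h'$ directly against $\max(1,2/\nu)$, so that $|2(F+\nu)f_h'(F)|$ is controlled and the remaining factor is $|2(F+\nu)-\langle DF,-DL^{-1}F\rangle_{\mathscr H}|/(F+\nu)$ — and take the supremum over $h$. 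The cleanest route, and the one matching the stated constant, is to write the integrand as $f_h'(F)\bigl(2(F+\nu)-\langle DF,-DL^{-1}F\rangle_{\mathscr H}\bigr)$, note $|f_h'(F)|\le\max(1,2/\nu)/|F+\nu|$ is not quite what we want; instead use the sharper solution bound $\fnorm{f_h'}\cdot$(appropriate weight) — so I would carefully cite the precise Stein-solution estimate from the earlier section giving $\fnorm{(\,\cdot+\nu)f_h'}\le\max(1,2/\nu)$ and combine it so that the factor multiplying $2(F+\nu)-\langle DF,-DL^{-1}F\rangle_{\mathscr H}$ is bounded by $\max(1,2/\nu)$, yielding the first inequality in \eqref{e:gaussgamma}. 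The second inequality is just Jensen/Cauchy--Schwarz: $\E|\E[\cdot|F]|\le\E[(\cdot)^2]^{1/2}$ (using that conditional expectation is an $L^2$-contraction, or simply $\E|Y|\le(\E Y^2)^{1/2}$ applied after conditioning).

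For the chaos case \eqref{e:gaussgamma2}, I would invoke the standard fact that if $F=I_p(f)\in\mathrm{Ker}(L+pI)$ then $L^{-1}F=-p^{-1}F$, hence $-DL^{-1}F=p^{-1}DF$ and $\langle DF,-DL^{-1}F\rangle_{\mathscr H}=p^{-1}\|DF\|^2_{\mathscr H}$; substituting into \eqref{e:gaussgamma} gives \eqref{e:gaussgamma2} immediately. The hypothesis $p\ge2$ (rather than $p\ge1$) is needed because for $p=1$, $F=I_1(f)$ is Gaussian and cannot be close to a nontrivial $Z_\nu$; more technically, for $p\ge2$ the kernel-contraction machinery guaranteeing $F$ has enough integrability (so that $DF$ and the products entering the estimate are well-defined in $L^2$) goes through — but for the \emph{statement} as written we only need $F\in\mathbb{D}^{1,2}$, which holds automatically for elements of a fixed chaos, so the substitution is purely algebraic.

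The main obstacle is not the Malliavin integration by parts, which is routine, but rather ensuring that the Stein-equation solution bounds from the preceding sections are strong enough — specifically that one has a clean estimate on $(\,\cdot+\nu)f_h'$ over the \emph{entire} real line for merely Lipschitz $h$ (no differentiability of $h$ beyond Lipschitz), with the explicit constant $\max(1,2/\nu)$ — and that the boundary/regularity terms arising when one applies the Gaussian integration-by-parts formula to $f_h(F)$ genuinely vanish, which requires $f_h$ and $f_h'$ to have at most linear growth so that $f_h(F)\in\mathbb{D}^{1,2}$ whenever $F\in\mathbb{D}^{1,2}$ — and this is precisely where the whole-real-line analysis of \eqref{steineq1} developed earlier pays off, since the classical solutions (supported on $\R_+$, as in \cite{Luk, Pick}) would not be regular enough and would force the appearance of positive parts and an extra derivative on $h$, landing in $d_2$ or $d_3$ as in \cite{PTh, NP-ptrf}.
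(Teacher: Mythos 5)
Your overall architecture is the paper's: solve a Stein equation for the centered Gamma on the whole real line, apply the Gaussian integration by parts $\E[Ff_h(F)]=\E[f_h'(F)\langle DF,-DL^{-1}F\rangle_{\mathscr H}]$ to arrive at $\E[h(F)]-\E[h(Z_\nu)]=\E\bigl[f_h'(F)\bigl(2(F+\nu)-\langle DF,-DL^{-1}F\rangle_{\mathscr H}\bigr)\bigr]$, insert the conditional expectation given $F$, use Cauchy--Schwarz for the second inequality, and use $-DL^{-1}F=p^{-1}DF$ on the $p$th chaos for \eqref{e:gaussgamma2}. However, at the one step that produces the constant you invoke an estimate of the form $\fnorm{(\cdot+\nu)f_h'}\leq\max\bigl(1,\frac{2}{\nu}\bigr)$, which the paper neither proves nor needs, and which would not close the argument as you use it: after integration by parts the factor multiplying $2(F+\nu)-\langle DF,-DL^{-1}F\rangle_{\mathscr H}$ is $f_h'(F)$ alone, so a bound on the weighted quantity $(x+\nu)f_h'(x)$ controls nothing unless you divide by $F+\nu$, which can vanish or be negative. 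What actually closes the argument is the plain sup-norm bound of Theorem \ref{bounds2} for the solution of the \emph{centered} equation \eqref{steineq2}, namely $2(x+\nu)f'(x)-xf(x)=h(x)-\E[h(Z_\nu)]$ (not the uncentered equation you display: the term $-xf(x)$ is precisely what makes $\E[Ff_h(F)]$ amenable to integration by parts, since $F$ is centered), which gives $\fnorm{f_h'}\leq\max\bigl(1,\frac{2}{\nu}\bigr)\fnorm{h'}$ and hence the stated constant immediately. Incidentally, the moment identity \eqref{e:lcm} plays no role in this proof.

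A second, smaller gap is the justification of the integration by parts for \emph{all} centered $F\in\mathbb{D}^{1,2}$ and all $1$-Lipschitz $h$. The identity $\E[Ff_h(F)]=\E[f_h'(F)\langle DF,-DL^{-1}F\rangle_{\mathscr H}]$ without any density assumption on $F$ requires $f_h$ to be continuously differentiable with bounded derivative; for merely Lipschitz $h$, part (a) of Theorem \ref{bounds2} only yields a Lipschitz solution. The paper resolves this by first taking $h$ twice continuously differentiable with bounded derivatives (so that Theorem \ref{bounds2}(b) gives $f_h\in C^1$ and the integration by parts is legitimate for every $F\in\mathbb{D}^{1,2}$), and then passing to a general $1$-Lipschitz $h$ via the mollification $h_\eps(x)=\E[h(x+\eps N)]$, using $\fnorm{h_\eps'}\leq\fnorm{h'}$ and $\fnorm{h-h_\eps}\to0$. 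You raise the right concern (that $f_h(F)$ must be regular enough for the boundary-free integration by parts) but never resolve it; without this smoothing step, or an equivalent argument, the bound \eqref{e:gaussgamma} is only established for smooth test functions rather than for the full $1$-Wasserstein class.
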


Inequality \eqref{e:gaussgamma} improves \cite[Theorem 3.11]{NP-ptrf}, where a similar upper bound is proved for a smoother distance (written $d_{\mathscr{H}_2}$ therein) involving test functions of class $C^2$ with bounded derivatives. By inspection of the proofs contained in \cite{NP-ptrf}, one sees that such a smoothness requirement on test functions is indeed an artefact of the bounds contained in \cite{Luk}. By combining Theorem \ref{t:gaussgamma} with the main findings from \cite{NPo-spa} and with some computations from \cite{APP}, one also obtains the following non-trivial quantitative characterisation of Gamma convergence in total variation inside a fixed sum of Wiener chaoses. We recall that, given two real-valued random variables $X,Y$, the {\it total variation distance} between the distributions of $X$ and $Y$ is given by
$$
d_{TV}(X,Y) = \sup_{A\in \mathscr{B}(\R)} \left | \Prob[X\in A] - \Prob[Y\in A]\right|,
$$
where $\mathscr{B}(\R)$ stands for the class of all Borel subsets of $\R$.

\begin{prop}\label{p:gaussgamma} Fix $\nu>0 $, as well as an integer $m\geq 2$, and let  $\{F_n : n\geq 1\} \subseteq \bigoplus_{p=1}^m{\rm Ker}(L+pI)$ be such that $\E[F_n^2]\to 2\nu$.  Then, $F_n$ converges in distribution to $Z_\nu$ if and only if
\begin{equation}\label{e:sar}
\E\Babs{ \E \big\{ 2(F_n+\nu)-\langle DF_n,-DL^{-1}F_n\rangle _{\mathscr{H} }  \,  | \, F_n \big \} }\to 0, \quad n\to \infty,
\end{equation}
and there exists a finite constant $c>0$ (not depending on $n$)
such that
\begin{equation}\label{e:sar1}
d_{TV}(F_n, Z_\nu) \leq c \left (  \E\Babs{ \E \big\{ 2(F_n+\nu)-\langle DF_n,-DL^{-1}F_n\rangle _{\mathscr{H} }  \,  | \, F_n \big \} }\right)^{1/{2m+1} }.
\end{equation}
\end{prop}
 One has also to observe that, according to \cite{NP09a}, if the sequence $\{F_n\}$ in Proposition \ref{p:gaussgamma}  is such that $\{F_n\}\subseteq {\rm Ker}(L+mI)$ and \eqref{e:sar} is verified, then necessarily $m$ is an even integer. See also \cite{AS, KuTu} for some related limit theorems. The proofs of Theorem \ref{t:gaussgamma} and Proposition \ref{p:gaussgamma} are given in Section \ref{gauss}.

{
\subsection{About our approach and assumptions}\label{ss:approach}

{ We will now} make some technical remarks about { the methods and assumptions adopted in the present paper.} 

\begin{itemize}

\item[(i)] First of all, we recall that { the Stein equation associated with a} given distribution is in general not unique, and several approaches are available in order to { select} a specific one. One of these methods, the so-called \textit{density approach} (see e.g. \cite{CGS} and \cite{LRS17}), suggests a Stein equation of the form 
\begin{equation*}
 f'(x)-\psi(x) f(x)=h(x)-\E[h(X_{r,\lambda})]\,,
 \end{equation*}
where $\psi(x):=\frac{d}{dx}\log p_{r,\lambda}(x)$, $x>0$, is the log-derivative of the density function. It is easy to see that, here, $\psi(x)=\frac{r-1-\lambda x}{x}$ is a { genuinely} rational function of $x$ (unless $r=1$), which makes this equation very difficult to apply in concrete situations { involving} probability approximations. Note in particular that, in the three examples presented in Subsections \ref{ss:dejongintro}--\ref{ss:gaussintro}, it is for us of fundamental importance to have a linear coefficient of $f(x)$ in the Stein equation, which makes { the choice of} \eqref{steineq1} inevitable. 

\item[(ii)] Due to their wide applicability (in particular in combination with Malliavin calculus techniques) the class of first order Stein equations having a linear coefficient of $f$, characterizing some 
absolutely continuous distribution $\mu$ on $\R$, has been well-studied in the recent literature (see e.g. \cite{KuTu12}, \cite{EdVi15}, \cite{DoeBeta}). In particular, in \cite{EdVi15} (see Remark 10 and the last paragraph on page 200) the authors prove by means of a universal counterexample the following remarkable fact: {\it { if the support of the distribution $\mu$ is a strict sub-interval of $\R$ and if $\mu$ is characterized by a Stein equation of the type 
\[a(x) f'(x)+(b-cx)f(x)=h(x)-\int hd\mu\]
for $b,c$ real constants ($c\not=0$), then there is no} finite constant $M>0$ such that $\fnorm{f_h''}\leq M\fnorm{h'}$ holds for all Lipschitz-continuous functions $h$ on $\R$.}  Here, $f_h$ denotes the usual 
solution of the Stein equation. { To the best of our expertise, this fact shows that the bounds on the solution $f_h$ of \eqref{steineq1} presented in Theorem \ref{bounds1} are the best that the technology of Stein's method can presently achieve for the Gamma distribution.} Such a structural result also immediately entails that, { as far as the Gamma distribution is concerned}, one necessarily has to assume more smoothness on the test function $h$, in order to be able to work with second derivatives of the Stein's solution. We observe that, except on the Gaussian space, where bounds on the first derivative $f'_h$ are sufficient due to the diffusiveness of the involved Ornstein-Uhlenbeck operator $L$, in the more general framework of Subsections \ref{ss:dejongintro} and \ref{ss:poissintro} one necessarily has to work with second derivatives as well, { because of the intrinsic discrete nature of the considered objects. }

\item[(iii)] If one aims at less smooth distances -- like the prominent Kolmogorov or Wasserstein distances -- then one needs to implement some additional smoothing procedure. { As it is typical, this inevitably} comes at the price { of a worse rate of convergence}. The new Lemma \ref{smoothing} stated above provides such a smoothing result which, roughly speaking, bounds the Wasserstein distance of quite arbitrary distributions in terms of a distance induced by test functions which have one additional order of smoothness. 

\item[(iv)] We stress that, with the exception of the references \cite{Doe12a} and \cite{Doe12c}, none of the references mentioned so far consider the Stein equation beyond the support interval of the corresponding distribution. 
For certain applications this is indeed not necessary, because, by applying some truncation procedure, one can force any random variable to have support in a given interval. However, for all three applications considered in this paper, applying 
truncation would immediately destroy the most important structural property of the random variables under consideration: { In Subsection \ref{ss:dejongintro}, the truncated random variable would no longer be a degenerate $U$-statistic of 
a given order and one would therefore have to work with a full Hoeffding decomposition; similarly, in the situations dealt with in Subsections \ref{ss:poissintro}--\ref{ss:gaussintro} the chaotic decomposition of the truncated random variable would immedieately be infinite and, thus, not directly amenable to computations.} Since in general our random variables may have support equal to the whole real line, it is for us imperative to deal with the Stein equation \eqref{steineq1} and its solution also outside the support of the target distribution.  

\item[(v)] Our main applications, see Theorem \ref{maintheo} and Corollary \ref{mtcor}, concern the centered Gamma approximation of a degenerate, not necessarily symmetric $U$-statistic $W$ of order $d$, { based on} some independent random sample $X_1,\dotsc,X_n$ ($n\geq d$) --- see Subsection \ref{ss:dejongintro}. Here, we would like to stress that the classical results about the asymptotic distributions of $U$-statistics obtained in e.g. \cite{DynMan83} and \cite{RubVi80} (see also \cite{ser-book} and \cite{Greg77} for the case $d=2$) do not apply. First of all, our data random variables $X_1,\dotsc,X_n$ are not necessarily i.i.d.. Moreover, and even more importantly, our $U$-statistics are in general {\it non symmetric}, and have kernels that in general {\it depend on} $n$ (see equation \eqref{nonsymU} above). { We conclude by pointing out that it is an open and challenging problem to determine the possible limits in distribution of general sequences of degenerate $U$-statistics of a fixed order $\geq 2$. Our Theorem \ref{maintheo} demonstrates the remarkable fact that the Gamma distribution emerges naturally for sequences of degenerate $U$-statistics of an arbitrary order, under minimal moment conditions and provided a Lindberg-type assumption is verified.}

\end{itemize}

}

\section{Stein's method and exchangeable pairs \\ for Gamma approximations}\label{stein}
\subsection{Main estimates for Gamma approximations} {\it Stein's method} is a popular technique for estimating the distance between the distribution of some given random variable $W$ and a usually better understood target distribution. It was first developed by Stein \cite{St72} for the 
standard normal distribution and has by now been extended to many other univariate distributions, like the Poisson (see e.g. \cite{Ch75}, \cite{AGG89} or \cite{BHJ}), the Exponential (see e.g. \cite{CFR11}, \cite{PekRol11} and \cite{FulRos13}), the Beta (\cite{GolRei13} and \cite{DoeBeta}), the Gamma (\cite{Luk}, \cite{Pick}, \cite{Gau13} and  \cite{GRP}) and the Variance-Gamma (see \cite{Gau14}) distributions.\\
Stein's method for the Gamma distribution was first considered by Luk \cite{Luk}. There it was found that a real random variable $X$ has the $\Gamma(r,\lambda)$ distribution if and only if 
\begin{equation*}
 \E\bigl[Xf'(X)\bigl]=-\E\bigl[(r-\lambda X)f(X)\bigr]
\end{equation*}
holds for a sufficiently rich class of functions $f$. Following Stein's seminal idea this led him to the Gamma Stein equation \eqref{steineq1}, which, given the test function $h$ on $\R$ with $\E\abs{h(X_{r,\lambda})}<\infty$, is to be solved for $f$.   
Usually, this equation is only considered and solved on the support $[0,\infty)$ of $\Gamma(r,\lambda)$ but for our purposes we will need a solution $f_h$ to \eqref{steineq1} which is defined on the whole real line. Here, 
by a solution of \eqref{steineq1} we mean a function $f$ on $\R$ which is locally absolutely continuous and which satisfies \eqref{steineq1} at those points at which it is in fact differentiable. 
Given such a function, contrary to the usual convention, we define $f'$ at the non-zero points of non-differentiability of $f$ by \eqref{steineq1}. If $f$ is not differentiable at $0$, then, for definiteness, we let
$f'(0):=0$. For a test function $h$ as above, a solution $f_h$ to \eqref{steineq1} and a given real-valued random variable $W$ we thus obtain 
\begin{equation}\label{steinid}
 \babs{\E[h(W)]-\E[h(X_{r,\lambda})]}=\babs{\E\bigl[Wf_h'(W)+(r-\lambda W)f_h(W)\bigr]}\,,
 \end{equation}
whenever the right hand side is well-defined. As it turns out, the right hand side of \eqref{steinid} may often be efficiently bounded by means of some additional tool exploiting the structure of the random quantity $W$. This might be a similar characterization for the law of $W$, an integration by parts formula on the space where $W$ is defined, or a suitable coupling construction. \\
In any case, in order to bound the right hand side of \eqref{steinid} it is crucial to have smoothness bounds on the solution $f_h$ of \eqref{steineq1} in terms of the test function $h$. One of the theoretical contributions of this paper is 
to provide a new set of such bounds which are valid for the solution $f_h$ on the whole real line, not just on $[0,\infty)$. This is essential for our purposes, as the random variables $W$ we consider in our applications do not necessarily have range included in the positive axis. Another consequence of our new bounds is an improvement of Theorem 2.1 from \cite{PTh} and its consequences which deals with the Gamma approximation of functionals of a Poisson random measure.

To deal with our main application in this paper, we develop the technique of \textit{exchangeable pairs} in the context of Gamma approximation. This coupling construction lies at the heart of Stein's method and was first considered for normal approximation in Stein's celebrated monograph \cite{St86}. In the recent paper \cite{DP16} the authors applied it to the uni- and multivariate approximation of (vectors of) degenerate $U$-statistics. In particular, we 
were able to derive a complete quantitative extension of a famous CLT by de Jong \cite{deJo90}. 

In what follows, for a function $f$ on $\R$, we denote by 
\begin{equation*}
 \fnorm{f'}:=\sup_{x\not=y}\frac{\abs{f(x)-f(y)}}{\abs{x-y}}\in[0,\infty)\cup\{+\infty\}
\end{equation*}
its minimum Lipschitz constant. This notation does not cause any confusion as it coincides with the supremum norm of the derivative of $f$ whenever $f$ is differentiable.
Similarly, if $f$ is $n$-times differentiable for some $n\geq1$, we denote by $\fnorm{f^{(n+1)}}$ the minimum Lipschitz constant of $f^{(n)}$. 
We can now state our new smoothness estimates for the solution $f_h$ of \eqref{steineq1} on $\R$. We defer the proof of the next theorem to the end of this section. 

\begin{theorem}\label{bounds1}
\begin{enumerate}[{\normalfont (a)}]
 \item Let $h$ be Lipschitz-continuous on $\R$. 
 Then, there exists a Lipschitz-continuous solution $f_h$ of \eqref{steineq1} on $\R$ which satisfies the bounds
\begin{equation}\label{e:hargrove}
\fnorm{f_h}\leq\lambda^{-1}\fnorm{h'}\quad\text{and}\quad \fnorm{f_h'}\leq 2\max\Bigl(1,\frac{1}{r}\Bigr)\fnorm{h'}\,.
\end{equation}
 \item Suppose that $h$ is continuously differentiable on $\R$ and that both $h$ and $h'$ are Lipschitz-continuous. Then, the solution $f_h$ of \eqref{steineq1} from {\normalfont (a)} is continuously differentiable and its derivative $f_h'$
 is Lipschitz-continuous with minimum Lipschitz constant
\begin{equation}\label{e:earfood}
\fnorm{f_h''}\leq 4\lambda\max\Bigl(1,\frac{1}{r}\Bigr)\fnorm{h'}+2\fnorm{h''}\,.
\end{equation}
\end{enumerate}
\end{theorem}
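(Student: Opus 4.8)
The plan is to construct $f_h$ explicitly by solving the first-order linear ODE \eqref{steineq1} and then to estimate the resulting integral representation carefully, separately on the positive half-line and on the negative half-line, so as to obtain bounds that are uniform over all of $\R$. On $(0,\infty)$ the integrating factor for \eqref{steineq1} is $\mu(x) := x^{r-1}e^{-\lambda x}$ (up to the normalising constant this is exactly the Gamma density), so multiplying through and integrating gives the candidate
\begin{equation*}
f_h(x) = \frac{1}{x^{r}e^{-\lambda x}}\int_0^x \bigl(h(t)-\E[h(X_{r,\lambda})]\bigr)\,t^{r-1}e^{-\lambda t}\,dt, \qquad x>0,
\end{equation*}
which can equivalently be written as an expectation against a Gamma-type law; the standard alternative representation integrating from $x$ to $\infty$ gives the same function because $h(t)-\E[h(X_{r,\lambda})]$ integrates to zero against the $\Gamma(r,\lambda)$ density. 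For $x\le 0$ one extends the solution by solving the ODE with the terminal/initial condition matching $f_h(0)$ obtained as a limit from the right; the key point here is that the coefficient $x$ of $f'$ degenerates at $0$, so one must check that the integral representation has a finite, well-behaved limit as $x\downarrow 0$ and that the extension to $x<0$ is locally absolutely continuous. Once $f_h$ is pinned down, part (a) amounts to: (1) show $\fnorm{f_h}\le \lambda^{-1}\fnorm{h'}$ by using that, after subtracting the mean, $|h(t)-\E[h(X_{r,\lambda})]|$ is controlled by $\fnorm{h'}$ times a Wasserstein-type quantity, and exploiting the mean-zero cancellation in the integrand together with elementary estimates on incomplete Gamma integrals; (2) bound $\fnorm{f_h'}$ by solving \eqref{steineq1} for $f_h'(x) = x^{-1}\bigl(h(x)-\E[h(X_{r,\lambda})] - (r-\lambda x)f_h(x)\bigr)$ and controlling the apparent singularity at $x=0$, where again the mean-zero property and the bound on $\fnorm{f_h}$ conspire to keep everything finite; the factor $\max(1,1/r)$ will emerge precisely from the small-$x$ regime, where the weight $t^{r-1}$ makes the estimates worst when $r<1$.

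For part (b), the plan is to differentiate the identity $f_h'(x) = x^{-1}\bigl(h(x)-\E[h(X_{r,\lambda})]\bigr) - x^{-1}(r-\lambda x)f_h(x)$ once more, or — more robustly near $x=0$ — to set $g := f_h'$ and observe that $g$ itself satisfies a first-order linear ODE obtained by differentiating \eqref{steineq1}, namely $x g'(x) + (r+1-\lambda x) g(x) = h'(x) + \lambda f_h(x)$, which is again a Gamma-type Stein equation (with shape parameter $r+1$) for the modified right-hand side $h'(x) + \lambda f_h(x)$. Applying the already-proved bound \eqref{e:hargrove} of part (a), with $r+1$ in place of $r$, to this new equation yields $\fnorm{g'}\le 2\max\bigl(1,\tfrac{1}{r+1}\bigr)\fnorm{(h' + \lambda f_h)'} \le 2\bigl(\fnorm{h''} + \lambda\fnorm{f_h'}\bigr)$, and then substituting the bound $\fnorm{f_h'}\le 2\max(1,1/r)\fnorm{h'}$ from part (a) gives exactly \eqref{e:earfood}. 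This bootstrapping via the shifted Stein equation is the cleanest route and avoids a direct, messy second differentiation of the integral representation; one just has to check the hypotheses of part (a) are met for the new equation, i.e. that $h' + \lambda f_h$ is Lipschitz (which follows from $h'$ Lipschitz and $f_h$ Lipschitz, both already in hand) and that $\E[(h'+\lambda f_h)(X_{r+1,\lambda})]$ matches the required centering — here one uses the defining ODE evaluated in expectation under $\Gamma(r+1,\lambda)$, or equivalently the characterising identity of Luk with parameter $r+1$.

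The main obstacle, as the paper itself emphasises, is the behaviour at and beyond the boundary point $x=0$: the Stein operator degenerates there, the integrating factor $x^{r-1}$ is singular when $r<1$, and — crucially — we need the solution and its first two derivatives to remain bounded on the negative axis, outside the support of the target law, where the naive "integrate from $x$ to $\infty$" representation is not available and one must instead propagate the solution leftward through the degenerate point. Controlling this requires a delicate matching argument at $0$ and careful splitting of the integral estimates into the regimes $x$ small, $x$ moderate, and $x$ large, with the weight $t^{r-1}e^{-\lambda t}$ handled via bounds on lower and upper incomplete Gamma functions; this is where the constants $\max(1,1/r)$ and the factors of $\lambda$ are actually produced, and where the structural impossibility result from \cite{EdVi15} quoted in Section \ref{ss:approach} tells us we cannot hope to do better than \eqref{e:earfood} — in particular, a bound on $\fnorm{f_h''}$ purely in terms of $\fnorm{h''}$, without the extra $\fnorm{h'}$ term, is provably unattainable.
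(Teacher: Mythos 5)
Your part (b) is essentially the paper's argument: differentiating \eqref{steineq1} shows $g=f_h'$ satisfies the $\Gamma(r+1,\lambda)$ Stein equation with test function $h_2=h'+\lambda f_h$, and feeding this back into part (a) yields \eqref{e:earfood}. Two points are missing even there, though: applying part (a) to $h_2$ only produces \emph{some} bounded Lipschitz solution $g_{h_2}$ of the shifted equation, and since the homogeneous equation has nontrivial solutions on each half-line you must argue that $f_h'$ coincides with $g_{h_2}$ (the paper does this via a uniqueness lemma for bounded solutions on $(-\infty,0)$ and $(0,\infty)$, using that $f_h'$ is already known to be bounded by part (a)), and you must then patch across $x=0$ to get that $f_h$ is $C^1$ on all of $\R$ (the paper uses the explicit limits of $g_{h_2}$ at $0$ together with an absolute-continuity lemma). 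Without the identification step the inequality $\fnorm{f_h''}\leq 2\fnorm{h_2'}$ is asserted for the wrong function.

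The genuine gap is in part (a), which is the technical heart of the theorem. You never actually establish the derivative bound $\fnorm{f_h'}\leq 2\max(1,1/r)\fnorm{h'}$; you only assert that the constant ``will emerge from the small-$x$ regime'' after incomplete-Gamma estimates. Worse, the one concrete route you propose --- reading off $f_h'(x)=x^{-1}\bigl(h(x)-\E[h(X_{r,\lambda})]-(r-\lambda x)f_h(x)\bigr)$ and combining the Lipschitz bound on $h$ with $\fnorm{f_h}\leq\lambda^{-1}\fnorm{h'}$ --- does not work near $x=0$: the triangle inequality gives only $\abs{f_h'(x)}\leq 2\fnorm{h'}\bigl(1+\tfrac{r}{\lambda\abs{x}}\bigr)$, which blows up as $x\to 0$, and the needed cancellation (that the numerator is $O(\abs{x})$) requires much finer information than boundedness of $f_h$. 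The paper instead bounds $(f_h^+)'$ and $(f_h^-)'$ through their exact integral representations, proving $\abs{(f_h^+)'}\leq 2\fnorm{h'}$ on $(0,\infty)$ via a Fubini computation and a monotonicity argument for the ratio $R(x)=\int_0^xF_r/\bigl(x^2p_r(x)\bigr)$, and $\abs{(f_h^-)'}\leq\tfrac{2}{r}\fnorm{h'}$ on $(-\infty,0)$ via a separate analysis (limits at $0$ and $\infty$ by l'H\^opital, plus a local-maxima comparison with an auxiliary function bounded by $1/r$); this is precisely where $2\max(1,1/r)$ comes from, and the refinement \eqref{e:finer1} with the sharp $2/r$ on the negative axis is what Remark \ref{r:explosion} shows to be unavoidable. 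Similarly, your extension to $x\leq 0$ by ``matching at $0$'' needs the observation that the homogeneous solution $\abs{x}^{-r}e^{\lambda x}$ explodes at $0^-$, so that boundedness (not just the value at $0$) singles out the correct branch; the paper handles this with an explicit formula for $f_h^-$ and the uniqueness lemma. As it stands, your proposal is a plausible outline whose crucial estimates are not carried out, and whose suggested shortcut for the first-derivative bound fails at the degeneracy point.
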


\begin{remark}
\begin{enumerate}[(a)]
\item By inspection of the proof of Theorem \ref{bounds1}, one sees that the following refinement of \eqref{e:hargrove} holds: writing $f_h^+$ and $f_h^{-}$ for the restriction of $f_h$ to $\R_+$ and $\R_{-}$, respectively, one has that
\begin{equation}\label{e:finer1}
\fnorm{(f^+_h)'}\leq 2\fnorm{h'}, \quad \mbox{and} \quad  \fnorm{(f^{-}_h)'}\leq \frac{2}{r}\fnorm{h'}
\end{equation}
We will see in Remark \ref{r:explosion} that, in principle, the quantity $2/r$ in the previous estimate {\it cannot} be replaced by a factor that is uniformly bounded in $r$.

\item Using the iterative technique for bounding higher derivatives of solutions to Stein equations from \cite{DoeBeta} which is further detailed in the recent paper \cite{DGV}, from the bound given in Theorem \ref{bounds1} (b), we can easily derive the bound 
\begin{align}\label{horder}
 \fnorm{f_h^{(k)}}&\leq2^k\lambda^{k-1}(k-1)!\max\Bigl(1,\frac{1}{r}\Bigr)\fnorm{h'}\notag\\
 &\;+\sum_{j=0}^{k-2} 2^{j+1}\lambda^j\frac{(k-1)!}{(k-j-1)!}\fnorm{h^{(k-j)}}\,,
\end{align}
valid for each $k\geq1$ and each $(k-1)$-times differentiable test function $h$, whose first $k-1$ derivatives are bounded and whose $(k-1)$st derivative is Lipschitz-continuous. 
\item In the recent paper \cite{GRP}, the authors proved that for each $k\geq1$ and each test function $h$ from some specific sub-class $\mathcal{C}_{\lambda, k}$ of all $(k-1)$-times differentiable functions such that $h^{(k-1)}$ is still absolutely continuous, the following bound holds: 
\begin{equation}\label{gprbound}
 \sup_{x>0}\abs{f_h^{(k)}(x)}\leq\frac{2}{r+k}\Bigl(3\sup_{x>0}\abs{h^{(k)}(x)}+2\lambda \sup_{x>0}\abs{h^{(k-1)}(x)}\Bigr)
\end{equation}
Note that, as opposed to the bounds from Theorem \ref{bounds1} or the bound \eqref{horder}, this bound converges to $0$ whenever the shape parameter $r$ of the Gamma distribution goes to $\infty$, which can be beneficial for certain applications as demonstrated in 
\cite{GRP}. However, the bounds given in the present paper are valid on the whole real line and are thus applicable to a broader class of applications. 
{We conjecture that there do exist positive, finite constants $C_{r,\lambda}^{(1)}$ and 
$C_{r,\lambda}^{(2)}$ with 
\begin{equation*}
 \fnorm{f_h'}\leq C_{r,\lambda}^{(1)}\fnorm{h'}\quad\text{and}\quad\fnorm{f_h''}\leq C_{r,\lambda}^{(2)}\bigl(\fnorm{h'}+\fnorm{h''}\bigr) 
 \end{equation*}
 such that $\lim_{r\to\infty}C_{r,\lambda}^{(1)}=\lim_{r\to\infty}C_{r,\lambda}^{(2)}=0$. These may be derived by a more careful investigation of the solutions $f_h$ on the support interval $[0,\infty)$.
 On the other hand, as already mentioned (see again Remark \ref{r:explosion}), the property $\lim_{r\downarrow0}C_{r,\lambda}^{(1)}=\infty$ is inevitable, as opposed to the bounds \eqref{gprbound} for the solutions on $(0,\infty)$.}
 \end{enumerate}
\end{remark}

\subsection{Targeting the centered Gamma distribution} Next, we transfer the bounds found in Theorem \ref{bounds1} to the centered Gamma distribution $\Gammabar(\nu)$ of $Z_\nu$ and state an off-the-shelf result, which bounds the distance between the distribution of a given random variable 
$W$ and $\Gammabar(\nu)$ in terms of an exchangeable pair. To the best of our knowledge this approach has not been considered in the context of Gamma approximation so far. 
The Stein equation for $\Gammabar(\nu)$ we use is given by 
\begin{equation}\label{steineq2}
 2(x+\nu)f'(x)-xf(x)=h(x)-\E\bigl[h(Z_\nu)\bigr]\,,
\end{equation}
where $h$ is Borel-measurable on $\R$ with $\E\abs{h(Z_\nu)}<\infty$. 

\begin{theorem}\label{bounds2}
 \begin{enumerate}[{\normalfont (a)}]
 \item Let $h$ be Lipschitz-continuous on $\R$. 
 Then, there exists a Lipschitz-continuous solution $f_h$ of \eqref{steineq2} on $\R$ which satisfies the bounds
 { \[\fnorm{f_h}\leq\fnorm{h'}\quad\text{and}\quad \fnorm{f_h'}\leq\max\Bigl(1,\frac{2}{\nu}\Bigr) \fnorm{h'}\,.\]}
 \item Suppose that $h$ is continuously differentiable on $\R$ such that both $h$ and $h'$ are Lipschitz-continuous. Then, there is a continuously differentiable solution $f_h$ of \eqref{steineq2} on $\R$ whose 
 derivative $f_h'$ is Lipschitz-continuous with minimum Lipschitz constant
{ \[\fnorm{f_h''}\leq \max\Bigl(1,\frac{2}{\nu}\Bigr)\fnorm{h'}+\fnorm{h''}\,.\]}
\end{enumerate}
\end{theorem}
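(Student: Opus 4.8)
The plan is to derive Theorem \ref{bounds2} from Theorem \ref{bounds1} by exploiting the fact that the centered Gamma distribution $\Gammabar(\nu)$ is obtained from the Gamma distribution $\Gamma(\nu/2,1)$ by the affine transformation $x\mapsto 2x-\nu$. Concretely, I would first observe that the Stein equation \eqref{steineq2} for $\Gammabar(\nu)$ can be transported to the Stein equation \eqref{steineq1} for $\Gamma(r,\lambda)$ with the specific parameter choice $r=\nu/2$ and $\lambda=1$. If $g$ solves \eqref{steineq1} for this $(r,\lambda)$ with test function $\htilde(y):=h(2y-\nu)$, i.e.
\begin{equation*}
yg'(y)+\Bigl(\frac{\nu}{2}-y\Bigr)g(y)=\htilde(y)-\E[\htilde(X_{\nu/2,1})]=h(2y-\nu)-\E[h(Z_\nu)]\,,
\end{equation*}
then I would set $f_h(x):=2g\bigl(\frac{x+\nu}{2}\bigr)$ (possibly up to a harmless multiplicative constant to be pinned down by the computation) and verify by the change of variables $x=2y-\nu$, $y=\frac{x+\nu}{2}$ that $f_h$ solves \eqref{steineq2}. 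This is the mechanical heart of the argument: differentiating gives $f_h'(x)=g'\bigl(\frac{x+\nu}{2}\bigr)$, so $2(x+\nu)f_h'(x)-xf_h(x)$ becomes, after substitution, $4y g'(y)-(2y-\nu)\cdot 2g(y)=4\bigl(yg'(y)+(\frac{\nu}{2}-y)g(y)\bigr)$, which matches $h(2y-\nu)-\E[h(Z_\nu)]$ up to the constant factor $4$; the factor is then absorbed by rescaling $f_h$, so that the correct definition is $f_h(x):=\frac12\, g\bigl(\frac{x+\nu}{2}\bigr)$ (the bookkeeping of this constant is the one routine calculation I would actually carry out carefully).

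Once the correspondence $f_h \leftrightarrow g$ is established, the norm bounds in parts (a) and (b) follow by tracking how the sup-norms and Lipschitz constants transform under the affine change of variables and how $\htilde$ inherits regularity from $h$. Since $\htilde(y)=h(2y-\nu)$ we have $\htilde'(y)=2h'(2y-\nu)$ and $\htilde''(y)=4h''(2y-\nu)$, hence $\fnorm{\htilde'}=2\fnorm{h'}$ and $\fnorm{\htilde''}=4\fnorm{h''}$ (with the Lipschitz-constant conventions of the paper); and if $f_h(x)=\frac12 g\bigl(\frac{x+\nu}{2}\bigr)$ then $\fnorm{f_h}=\frac12\fnorm{g}$, $\fnorm{f_h'}=\frac12\fnorm{g'}$ after one differentiation ($f_h'(x)=\frac14 g'(\cdot)$, so actually $\fnorm{f_h'}=\frac14\fnorm{g'}$ — I will recompute these scalar factors in tandem with the constant above so they are mutually consistent), and similarly one more factor of $\frac12$ for $f_h''$. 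Plugging the bounds from \eqref{e:hargrove} with $r=\nu/2$, $\lambda=1$, so that $\max(1,1/r)=\max(1,2/\nu)$, then yields exactly $\fnorm{f_h}\leq\fnorm{h'}$ and $\fnorm{f_h'}\leq\max(1,\frac{2}{\nu})\fnorm{h'}$; and plugging \eqref{e:earfood} likewise gives the stated bound on $\fnorm{f_h''}$.

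For part (b) I would additionally note that the smoothness transfer is clean: if $h$ and $h'$ are Lipschitz-continuous then so are $\htilde$ and $\htilde'$, so Theorem \ref{bounds1}(b) applies to $\htilde$ and produces a continuously differentiable $g$ with Lipschitz $g'$; affine composition preserves these properties, so $f_h$ is continuously differentiable with Lipschitz $f_h'$, and the minimum-Lipschitz-constant bound is just the rescaled version of \eqref{e:earfood}. One subtlety worth a sentence is that the definition of "solution" in the paper (locally absolutely continuous, satisfying the equation where differentiable, with the convention fixing $f'$ at points of non-differentiability) is also stable under the affine substitution, since the bad set of $g$ maps to the bad set of $f_h$ and the value $0$ is a fixed point of $y\mapsto \frac{\nu}{2}-y$ shifted appropriately — in fact $x=0$ corresponds to $y=\nu/2$, which is an interior point, so no special care at the origin is needed for $f_h$ beyond what is already handled for $g$ at $y=0$ (corresponding to $x=-\nu$).

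The main obstacle is not conceptual but bookkeeping: getting the multiplicative constant in the definition $f_h(x)=c\,g\bigl(\frac{x+\nu}{2}\bigr)$ exactly right so that \eqref{steineq2} holds on the nose, and then propagating the compounded scalar factors (from the affine argument scaling, from the constant $c$, and from each differentiation) through both the equation and the three norm estimates without arithmetic slips. Everything else — the change of variables, the regularity transfer, the "solution outside the support" conventions — is routine once that normalization is fixed, and the key nontrivial input (namely that the Gamma Stein solution can be controlled on all of $\R$, with the $\max(1,1/r)$-type constants) is already supplied by Theorem \ref{bounds1}.
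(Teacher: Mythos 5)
Your proposal is correct and follows essentially the same route as the paper: the paper also sets $h_1(x):=h(2x-\nu)$, defines $f_h(x):=\frac12 g_{h}\bigl(\frac{x+\nu}{2}\bigr)$ for the solution $g_h$ of \eqref{steineq1} with $r=\nu/2$, $\lambda=1$, and obtains the stated bounds by exactly the scalar bookkeeping $\fnorm{f_h}=\frac12\fnorm{g_h}$, $\fnorm{f_h'}=\frac14\fnorm{g_h'}$, $\fnorm{f_h''}=\frac18\fnorm{g_h''}$ combined with $\fnorm{h_1'}=2\fnorm{h'}$, $\fnorm{h_1''}=4\fnorm{h''}$. Your final normalization $c=\tfrac12$ and the resulting constants agree with the paper's computation.
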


\begin{remark}
\begin{enumerate}[(a)]
\item Plainly, refined bounds analogous to \eqref{e:finer1} can be obtained for the function $f_h$ appearing in the statement of Theorem \ref{bounds2}.
\item In \cite{PTh} the slightly different Stein equation \begin{equation}\label{steineq3}
 2(x+\nu)_+f'(x)-xf(x)=h(x)-\E\bigl[h(Z_\nu)\bigr]
\end{equation}
with $g_+:=\max(g,0)$ was used. It turns out that the solution $f_h$ of \eqref{steineq2} from Theorem \ref{bounds2} has better smoothness properties at the singularity point $x=-\nu$ of the Stein equation than the solution 
of \eqref{steineq3} considered in \cite{PTh}. This makes it possible for us to improve the bounds on Gamma approximation on the Poisson space provided there. Furthermore, for the application to $U$-statistics in the present paper, it is essential to have a linear coefficient function for $f'$ in the Stein equation. This will become clear from the proof of Theorem \ref{maintheo} in Section \ref{proofmt}.
\end{enumerate}
\end{remark}

\begin{proof}[Proof of Theorem \ref{bounds2}]
 Given $h$ with $\E\abs{h(Z_\nu)}<\infty$ we define $h_1$ by $h_1(x):=h(2x-\nu)$. It is easy to see that if $g_h$ is the solution to \eqref{steineq1} with $h$ replaced by $h_1$ from Theorem \ref{bounds1}, 
 then the function $f_h$ with 
 \[f_h(x):=\frac{1}{2}g_h\Bigl(\frac{x+\nu}{2}\Bigr)\]
 solves \eqref{steineq2}. Furthermore, from Theorem \ref{bounds1} we obtain the bounds
 { \begin{align*}
  \fnorm{f_h}&=\frac{1}{2}\fnorm{g_h}\leq\frac12\fnorm{h_1'}=\fnorm{h'}\,,\\
  \fnorm{f_h'}&=\frac14\fnorm{g_h'}\leq\frac14\cdot 2\max\Bigl(1,\frac{2}{\nu}\Bigr)\fnorm{h_1'}=\max\Bigl(1,\frac{2}{\nu}\Bigr)\fnorm{h'}\quad\text{and}\\
  \fnorm{f_h''}&=\frac18\fnorm{g_h''}\leq\frac18\Bigl(2\fnorm{h_1''}+4\max\Bigl(1,\frac{2}{\nu}\Bigr)\fnorm{h_1'}\Bigr)\\
  &=\fnorm{h''}+\max\Bigl(1,\frac{2}{\nu}\Bigr)\fnorm{h'}\,.
 \end{align*}}
\end{proof}

\subsection{Exchangeable pairs} Let $W,W'$ be identically distributed real-valued random variables defined on the same probability space $(\Omega,\F,\Prob)$ such that $\E[W^2]<\infty$. Assume that $\G$ is a sub-$\sigma$-field of $\F$ such 
that $\sigma(W)\subseteq\G$. Given a real number $\lambda>0$ we define the random variables $R$ and $S$ via the \textit{regression equations}
\begin{align}
 \frac{1}{\lambda}\E\bigl[W'-W\,\bigl|\,\G\bigr]&=-W+R\quad\text{and}\label{linreg}\\
 \frac{1}{2\lambda}\E\bigl[(W'-W)^2\,\bigl|\,\G\bigr]&=2(W+\nu)+S\label{regprop2}\,.
\end{align}

In many cases of interest Equation \eqref{linreg} holds with $R=0$ for some (unique) $\lambda>0$ but as was exemplified in \cite{RiRo97} it is convenient to allow for a non-trivial remainder term $R$, in general. 
From Proposition 3.19 and Remark 3.10 of \cite{DoeBeta}, as well as from the bounds given by Theorem \ref{bounds2} we obtain the following new plug-in result for centered Gamma approximation { which can be seen as a generalization of Theorem 2.1 of \cite{FulRos13} dealing with exponential approximation}. This theorem will play a major role in the proof of Theorem \ref{maintheo}.  

\begin{theorem}\label{plugin}
Let $W$ and $W'$ be as above and assume that $h$ is continuously differentiable on $\R$ such that both $h$ and $h'$ are Lipschitz-continuous. Then, we have the bound
\begin{align*}
 \Babs{\E\bigl[h(W)\bigr]-\E\bigl[h(Z_\nu)\bigr]}&\leq\fnorm{h'}\bigl(\max(1,2\nu^{-1})\E\abs{S}+\E\abs{R}\bigr)\\
 &\;+\frac{\max\bigl(1,\frac{2}{\nu}\bigr)\fnorm{h'}+\fnorm{h''}}{6\lambda}\E\babs{W'-W}^3\,.
\end{align*}
If, moreover, $\E[W^2]=2\nu$ and \eqref{linreg} holds with $R=0$, then, since $\E[S]=0$ in this case, we also have the bound
\begin{align}\label{pluginbound}
 \Babs{\E\bigl[h(W)\bigr]-\E\bigl[h(Z_\nu)\bigr]}&\leq\max\Bigl(1,\frac{2}{\nu}\Bigr)\fnorm{h'}\sqrt{\Var(S)}\notag\\
 &\;+\frac{\max\bigl(1,\frac{2}{\nu}\bigr)\fnorm{h'}+\fnorm{h''}}{6\lambda}\E\babs{W'-W}^3\,.
\end{align}

\end{theorem}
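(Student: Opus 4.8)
The plan is to start from the Stein equation identity \eqref{steinid} specialised to \eqref{steineq2}: for any test function $h$ that is $C^1$ with $h,h'$ Lipschitz, let $f_h$ be the solution of \eqref{steineq2} furnished by Theorem \ref{bounds2}, so that
\[
\E[h(W)]-\E[h(Z_\nu)]=\E\bigl[2(W+\nu)f_h'(W)-Wf_h(W)\bigr].
\]
The right-hand side must then be rewritten and bounded by exploiting the exchangeable pair $(W,W')$ and the regression equations \eqref{linreg}--\eqref{regprop2}. This is exactly the mechanism of Proposition 3.19 (together with Remark 3.10) of \cite{DoeBeta}, which I would invoke: that result gives an abstract plug-in bound for distributional approximation via exchangeable pairs in terms of the quantities $\E|R|$, $\E|S|$, $\E|W'-W|^3$, and the smoothness constants $\fnorm{f_h}$, $\fnorm{f_h'}$, $\fnorm{f_h''}$ of the relevant Stein solution. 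Substituting the bounds $\fnorm{f_h'}\leq\max(1,2/\nu)\fnorm{h'}$ and $\fnorm{f_h''}\leq\max(1,2/\nu)\fnorm{h'}+\fnorm{h''}$ from Theorem \ref{bounds2}, and the coefficient $\tfrac{1}{6\lambda}$ coming from the antisymmetrisation/Taylor step in the exchangeable-pairs machinery, yields the first displayed inequality.

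Concretely, the argument underneath the cited abstract result runs as follows. Write $g:=f_h$ and consider $\E[g(W')-g(W)]=0$ by exchangeability. A second-order Taylor expansion of $g(W')-g(W)$ around $W$, using that $g'=f_h'$ is Lipschitz, produces a main term involving $g'(W)(W'-W)$, a term involving $g''$-type increments controlled by $\tfrac12\fnorm{f_h''}(W'-W)^2$-type remainders, and a cubic remainder. Taking conditional expectation given $\G$ and using the regression equations: the linear term contributes $-\lambda g'(W)(W-R)=-\lambda g'(W)(W+R\text{-correction})$ after rearranging with \eqref{linreg}, while the quadratic term, via \eqref{regprop2}, contributes $\lambda g''$-increments against $2(W+\nu)+S$. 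Matching these against $2(W+\nu)f_h'(W)-Wf_h(W)$ — here one also uses $\fnorm{f_h}\leq\fnorm{h'}$ to absorb the $-Wf_h(W)$ piece, or rather it appears naturally from the identity — the leftover terms are precisely a multiple of $\E|R|$ (from the remainder in \eqref{linreg}), a multiple of $\E|S|$ weighted by $\fnorm{f_h'}\leq\max(1,2/\nu)\fnorm{h'}$ (from the remainder in \eqref{regprop2}), and the cubic Taylor remainder bounded by $\tfrac{1}{6\lambda}\fnorm{f_h''}\,\E|W'-W|^3$, with $\fnorm{f_h''}\leq\max(1,2/\nu)\fnorm{h'}+\fnorm{h''}$. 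Taking the supremum over admissible $h$ is not needed here since the statement is pointwise in $h$.

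For the second, improved bound one specialises to the case $\E[W^2]=2\nu$ and $R=0$. Then \eqref{linreg} reads $\E[W'-W\mid\G]=-\lambda W$, and taking expectations in \eqref{regprop2} together with the standard exchangeable-pairs identity $\E\bigl[(W'-W)^2\bigr]=-2\E[W\,\E[W'-W\mid\G]]=2\lambda\E[W^2]=4\lambda\nu$ forces $\E[S]=\E[\tfrac{1}{2\lambda}(W'-W)^2]-2\E[W+\nu]=2\nu-2\nu=0$, using $\E[W]=0$ (which itself follows from $R=0$ and $\lambda>0$). Hence the term $\E|S|$ in the general bound can be replaced by the better quantity $\E|S-\E S|\leq\sqrt{\Var(S)}$ via Jensen/Cauchy--Schwarz, since only $\E[(\cdots)g'(W)]$ with a mean-zero multiplier matters and one may center — more precisely, in the general bound the contribution of $S$ enters as $\E[S\cdot(\text{bounded by }\fnorm{f_h'})]$ and centering $S$ changes this by $\E[S]\cdot(\text{something})=0$, after which Cauchy--Schwarz gives $\fnorm{f_h'}\sqrt{\Var(S)}$. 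This yields \eqref{pluginbound}.

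The main obstacle is bookkeeping the Taylor remainder and the regression remainders cleanly enough that the constants come out exactly as stated — in particular tracking that the quadratic term pairs with $2(W+\nu)+S$ to reconstruct the coefficient $2(W+\nu)$ of $f_h'(W)$ in \eqref{steineq2} while the $S$-part becomes an error, and that the antisymmetric (linear) part reconstructs $-Wf_h(W)+2(W+\nu)f_h'(W)$ correctly after dividing by $\lambda$. Fortunately, all of this is encapsulated in Proposition 3.19 and Remark 3.10 of \cite{DoeBeta}: the only genuinely new input is the pair of smoothness estimates in Theorem \ref{bounds2}, which I would simply plug in. Thus the proof is short, modulo citing the abstract exchangeable-pairs lemma from \cite{DoeBeta}.
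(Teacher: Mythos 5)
Your proposal is correct and coincides with the paper's own argument: the paper obtains Theorem \ref{plugin} precisely by invoking Proposition 3.19 and Remark 3.10 of \cite{DoeBeta} together with the new smoothness bounds of Theorem \ref{bounds2}, and your observations that $R=0$ and $\E[W^2]=2\nu$ force $\E[S]=0$, whence $\E\abs{S}\leq\sqrt{\Var(S)}$ by Cauchy--Schwarz, reproduce the passage to \eqref{pluginbound}. (One cosmetic point in your heuristic sketch: to obtain the constant $\frac{1}{6\lambda}$ using only $\fnorm{f_h''}$, and to have $\E\abs{R}$ multiplied by $\fnorm{f_h}\leq\fnorm{h'}$ rather than $\fnorm{f_h'}$, the Taylor expansion must be applied to a primitive of $f_h$ --- equivalently to the antisymmetric function $(w,w')\mapsto\int_w^{w'}f_h(t)\,dt$ --- so that the linear term carries $f_h(W)$ and the quadratic term carries $\tfrac12 f_h'(W)$; this is exactly what the cited result of \cite{DoeBeta} encapsulates.)
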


\subsection{Proofs} The following two lemmas will be needed for the proof of Theorem \ref{bounds1}.
\begin{lemma}\label{unique}
 Let $h$ be a  Borel-measurable function $h$ on $\R$ with $\E\abs{h(X_{r,\lambda})}<\infty$. Then, on each of the two intervals $(-\infty,0)$ and $(0,\infty)$, 
 there exists at most one bounded solution $f$ of \eqref{steineq1}.
\end{lemma}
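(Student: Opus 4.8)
The statement concerns uniqueness of bounded solutions of the first-order linear ODE \eqref{steineq1} on each of the half-lines $(0,\infty)$ and $(-\infty,0)$ separately. The natural approach is to reduce to showing that the \emph{homogeneous} equation $xf'(x)+(r-\lambda x)f(x)=0$ has no nonzero bounded solution on either half-line, since the difference of two solutions of \eqref{steineq1} solves the homogeneous equation. On $(0,\infty)$ the homogeneous equation is regular away from $0$, and separating variables gives $f(x)=c\,x^{-r}e^{\lambda x}$ for $x>0$ (up to the behaviour forced at $0$), which is unbounded near $x=0$ if $r>0$ (and near $+\infty$ because of the $e^{\lambda x}$ factor), so $c=0$; similarly on $(-\infty,0)$ one finds $f(x)=c\,\abs{x}^{-r}e^{\lambda x}$, which blows up as $x\to -\infty$ unless $c=0$.

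\textbf{Key steps.} First I would fix one of the two intervals, say $I=(0,\infty)$, and let $f_1,f_2$ be two bounded solutions of \eqref{steineq1} on $I$; set $g:=f_1-f_2$, which is locally absolutely continuous on $I$, bounded, and satisfies the homogeneous equation $xg'(x)=(\lambda x-r)g(x)$ at every point of differentiability, hence a.e.\ on $I$. Second, on any compact subinterval $[a,b]\subset(0,\infty)$ the coefficient $(\lambda x - r)/x$ is bounded and continuous, so $g$ is absolutely continuous there and $\frac{d}{dx}\bigl(x^{r}e^{-\lambda x} g(x)\bigr)=x^{r-1}e^{-\lambda x}\bigl(xg'(x)-(\lambda x-r)g(x)\bigr)=0$ a.e., whence $x^{r}e^{-\lambda x}g(x)$ is constant on every such $[a,b]$ and therefore constant, say equal to $c$, on all of $(0,\infty)$. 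Third, if $c\neq 0$ then $g(x)=c\,x^{-r}e^{\lambda x}\to\infty$ as $x\to+\infty$ (and also as $x\downarrow 0$ when $r>0$), contradicting boundedness; hence $c=0$, i.e.\ $g\equiv 0$ and $f_1=f_2$ on $(0,\infty)$. The argument on $(-\infty,0)$ is identical after writing $x^{r}$ as $(-x)^{r}$ (or working with $\abs{x}^{r}e^{-\lambda x}$), using that $x^{-r}e^{\lambda x}\to\infty$ as $x\to-\infty$.

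\textbf{Main obstacle.} The only delicate point is the regularity bookkeeping near the endpoint $0$ and the handling of the convention (adopted just before the lemma) that $f'$ is \emph{defined} at points of non-differentiability via \eqref{steineq1}: I must make sure that "solution" in the sense used in the paper still forces $x^{r}e^{-\lambda x}g(x)$ to be genuinely constant, rather than merely having a.e.\ zero derivative without absolute continuity. This is handled by the hypothesis that solutions are locally absolutely continuous on $\R$, together with the fact that on compact subsets of $(0,\infty)$ the multiplier $x^{r}e^{-\lambda x}$ is $C^1$ with no zeros, so products and the fundamental theorem of calculus apply cleanly; the singularity at $0$ never enters because the two intervals are treated separately and $0$ is excluded from each. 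Everything else is the routine integrating-factor computation.
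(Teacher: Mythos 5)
Your overall strategy is exactly the paper's: the difference of two bounded solutions solves the homogeneous equation, whose solutions on each half-line are the constant multiples of $\abs{x}^{-r}e^{\lambda x}$, and boundedness forces the constant to vanish. Your integrating-factor computation on $(0,\infty)$ and the remark that local absolute continuity makes the product rule and the fundamental theorem of calculus legitimate on compact subintervals are fine, and on the positive half-line you correctly identify the blow-up (at $0^+$ because of $x^{-r}$ with $r>0$, and at $+\infty$ because of $e^{\lambda x}$).

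However, your justification on $(-\infty,0)$ contains a false step. You claim that the homogeneous solution $c\,\abs{x}^{-r}e^{\lambda x}$ ``blows up as $x\to-\infty$'' and, at the end, that ``$x^{-r}e^{\lambda x}\to\infty$ as $x\to-\infty$''. This is wrong: since $\lambda>0$ and $r>0$, both factors $\abs{x}^{-r}$ and $e^{\lambda x}$ tend to $0$ as $x\to-\infty$, so the homogeneous solution decays there and no contradiction with boundedness arises from that end. The correct source of unboundedness on the negative half-line is the other endpoint: as $x\to 0^-$ one has $\abs{x}^{-r}\to\infty$ while $e^{\lambda x}\to 1$, so any nonzero multiple of $\abs{x}^{-r}e^{\lambda x}$ is unbounded near $0$ — this is precisely what the paper uses (its statement that the homogeneous solution diverges at $0-$). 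With this one-line repair the argument is complete and coincides with the paper's proof; as written, though, the step you invoke for $(-\infty,0)$ does not hold.
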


\begin{proof}
Let $f$ be a bounded solution of \eqref{steineq1} on $(-\infty,0)$. 
The solutions of the corresponding homogeneous equation are given by the constant multiples of the function 
\[{\psi(x):=\abs{x}^{-r}e^{\lambda x}\,,\quad x<0\,.}\]
Thus, if $g$ is another solution of \eqref{steineq1} on $(-\infty,0)$, then there is a constant $c\in\R$ such that 
\[g=f+c\psi\,.\]
As $\psi(0-)=-\infty$ and $\sup_{x<0}\abs{f(x)}<\infty$, it follows that $g$ can only be bounded if $c=0$, i.e. if $g=f$. The proof for the interval $(0,\infty)$ is very similar.
\end{proof}

\begin{lemma}\label{fprimelemma}
 Let $a<b$ be real numbers and let $f:[a,b]\rightarrow\R$ be a function having the following properties:  
\begin{enumerate}[{\normalfont (a)}]
 \item $f$ is continuous on $[a,b]$.
 \item $f_{|[c,b]}$ is absolutely continuous for each $a<c<b$ (and, hence, $f$ is $\lambda$-almost everywhere differentiable on $(a,b]$).
 \item There is some $a<d<b$, a set $A\subseteq (a,d)$ at each of whose points $f$ is differentiable with $\lambda\left((a,d)\setminus A\right)=0$ and a real number $\gamma$ such 
 that $\lim_{n\to\infty} f'(x_n)=\gamma$ for each sequence $(x_n)_{n\in\N}$ lying in $A$ with $\lim_{n\to\infty}x_n=a$.
\end{enumerate}
Then, $f$ is absolutely continuous on $[a,b]$ and differentiable at $a$ with $f'(a)=\gamma$. Furthermore, the function $f'$ restricted to $A\cup\{a\}$ is continuous at $a$.
\end{lemma}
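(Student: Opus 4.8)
The plan is to establish the three conclusions (absolute continuity on $[a,b]$, differentiability at $a$ with $f'(a)=\gamma$, and continuity at $a$ of $f'$ restricted to $A\cup\{a\}$) essentially by a careful application of the fundamental theorem of calculus combined with a dominated-convergence argument. First I would fix a sequence $a<c_n<d$ with $c_n\downarrow a$. By hypothesis (b), $f$ is absolutely continuous on each $[c_n,b]$, so for $c_n\leq x\leq b$ we have the representation $f(x)=f(c_n)+\int_{c_n}^x f'(t)\,dt$, where $f'$ is the a.e.\ derivative. The main point is to let $c_n\to a$ and pass to the limit inside the integral, which yields $f(x)=f(a)+\int_a^x f'(t)\,dt$ for all $x\in[a,b]$ (using continuity of $f$ at $a$ from (a) for the boundary term $f(c_n)\to f(a)$). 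Once this integral representation is in hand, absolute continuity of $f$ on $[a,b]$ follows immediately provided $f'$ is integrable near $a$, and differentiability at $a$ with $f'(a)=\gamma$ follows from the fact that $f'$ has limit $\gamma$ along $A$ as $x\downarrow a$ (so that $\frac{1}{x-a}\int_a^x f'(t)\,dt\to\gamma$).

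The hard part will be controlling the behaviour of $f'$ near $a$ so as to justify both the integrability and the interchange of limit and integral. Hypothesis (c) only gives $f'(x_n)\to\gamma$ along sequences in $A$ tending to $a$; this is a sequential statement, but by a standard subsequence argument it is equivalent to $\lim_{x\to a,\,x\in A}f'(x)=\gamma$. In particular there is some $a<d'\leq d$ such that $f'$ is bounded on $A\cap(a,d')$, say by a constant $M$. Since $\lambda\bigl((a,d')\setminus A\bigr)=0$, we get $|f'|\leq M$ a.e.\ on $(a,d')$, which secures local integrability of $f'$ at $a$ and, via dominated convergence on $[c_n,d']$ (dominating by $M$ on $(a,d')$ and by $|f'|\mathbf{1}_{[d',b]}\in L^1$ on the rest, the latter being integrable because $f$ is absolutely continuous on $[d',b]$), legitimizes letting $c_n\to a$ in the integral identity. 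This is really the crux: everything else is bookkeeping around the fundamental theorem of calculus.

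With the identity $f(x)=f(a)+\int_a^x f'(t)\,dt$ established for all $x\in[a,b]$, absolute continuity of $f$ on $[a,b]$ is immediate since $f'\in L^1([a,b])$. For differentiability at $a$ I would write, for $x>a$ close to $a$,
\[
\frac{f(x)-f(a)}{x-a}-\gamma=\frac{1}{x-a}\int_a^x\bigl(f'(t)-\gamma\bigr)\,dt,
\]
and bound the right-hand side by $\sup_{t\in(a,x)\cap A}|f'(t)-\gamma|$ (again using $\lambda((a,x)\setminus A)=0$), which tends to $0$ as $x\downarrow a$ by the reformulated hypothesis (c); hence $f'(a)$ exists and equals $\gamma$. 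Finally, the continuity at $a$ of $f'$ restricted to $A\cup\{a\}$ is just the assertion that $f'(x)\to\gamma=f'(a)$ as $x\to a$ through $A$, which is precisely (c) in its equivalent non-sequential form, so nothing further is needed. I would close by remarking that this lemma is exactly what is required to extend the solution $f_h$ of \eqref{steineq1}, a priori only controlled on $[c,b]$ for $c>0$, up to the boundary point of the relevant interval in the proof of Theorem \ref{bounds1}.
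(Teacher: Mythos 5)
Your proof is correct. Note that the paper itself gives no argument for this lemma --- it only refers to the appendix of the thesis \cite{Doe12c} --- so there is no in-paper proof to compare against; your fundamental-theorem-of-calculus argument supplies exactly the standard reasoning one would expect there. The key points all check out: the sequential hypothesis (c) is indeed equivalent to $\lim_{x\to a,\,x\in A}f'(x)=\gamma$, which gives an essential bound $|f'|\leq M$ a.e.\ on some $(a,d')$ because $\lambda\bigl((a,d')\setminus A\bigr)=0$; this makes $f'$ integrable up to $a$ and justifies passing to the limit in $f(x)=f(c_n)+\int_{c_n}^x f'(t)\,dt$ (using continuity of $f$ at $a$ for the boundary term), so that $f$ is an indefinite integral of an $L^1$ function and hence absolutely continuous on $[a,b]$; and the estimate $\bigl|\frac{f(x)-f(a)}{x-a}-\gamma\bigr|\leq\sup_{t\in(a,x)\cap A}|f'(t)-\gamma|$ correctly yields the one-sided derivative $f'(a)=\gamma$, with the final continuity claim being nothing but the non-sequential form of (c).
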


\begin{proof}
 A proof can be found in the appendix of the thesis \cite{Doe12c}.\\
\end{proof}

We are now in the position to prove Theorem \ref{bounds1}.

\begin{proof}[Proof of Theorem \ref{bounds1}]
 It suffices to prove the theorem for the case $\lambda=1$. In fact, it is easy to see that if $g$ solves 
 \begin{equation*}
 xg'(x)+(r-x)g(x)=h_1(x)-\E[h_1(X_{r,1})]\,,
\end{equation*}
where 
\begin{equation*}
 h_1(x):=h(x/\lambda)\,,
\end{equation*}
then $f(x):=g(\lambda x)$ solves \eqref{steineq1}. Taking into account the identities 
\begin{equation*}
 f^{(k)}(x)=\lambda^k g^{(k)}(\lambda x)\,,\quad\fnorm{h_1'}=\lambda^{-1}\fnorm{h'}\quad\text{and}\quad \fnorm{h_1''}=\lambda^{-2}\fnorm{h''}
\end{equation*}
then yields the bounds for general $\lambda>0$. So let us assume for the rest of the proof that $\lambda=1$.
For notational convenience we will also write $p_r$ for $p_{r,1}$, $F_r$ for $F_{r,1}$ and $X_r$ for $X_{r,1}$.

We first prove (a). 
As $h$ is continuous, it is known (see e.g. \cite{DoeBeta}, Proposition 3.8) that the function $f_h:(0,\infty)\rightarrow\R$ with 
\begin{align}\label{sol1}
 f_h^+(x):&=\frac{1}{xp_{r}(x)}\int_0^x\bigl(h(t)-\E[h(X_{r})]\bigr)p_{r}(t)dt\notag\\
 &=-\frac{1}{xp_{r}(x)}\int_x^\infty\bigl(h(t)-\E[h(X_{r})]\bigr)p_{r}(t)dt
\end{align}
is a continuously differentiable solution to \eqref{steineq1} on $(0,\infty)$ which can be continuously extended to $[0,\infty)$ by letting 
\begin{equation}\label{limzerop}
 f_h^+(0):=\frac{h(0)-\E[h(X_{r})]}{r}\,.
\end{equation}
For a Lipschitz-continuous test function $h$ we know from Corollary 3.15 of \cite{DoeBeta} that $\sup_{x\geq0}\abs{f_h^+(x)}\leq \fnorm{h'}$ and that for each $x>0$ we have
\begin{equation*}
 \abs{(f_h^+)'(x)}\leq2\fnorm{h'}\frac{\int_0^x F_{r}(t)dt\int_x^\infty(1-F_{r}(t))dt}{x^2p_{r}(x)}=:2\fnorm{h'}S_{r}(x)\,.
\end{equation*}

We bound $S_r(x)$ for $0<x\leq r$ and for $x>r$ separately. Assume $x>r$. From Fubini's theorem we conclude that
\begin{align*}
 \int_x^\infty(1-F_r(t))dt&=\int_x^\infty \int_t^\infty p_r(s)ds dt=\int_x^\infty (s-x)p_r(s)ds\\
&=\int_x^\infty (s-r)p_r(s)ds+(r-x)(1-F_r(x)) \\
&=xp_r(x)+(r-x)(1-F_r(x))\leq xp_r(x)\,.
\end{align*}
Also note that
\begin{equation}\label{eq1}
 \int_0^x F_r(t)dt\leq xF_r(x)\,,\quad x\geq0\,,
\end{equation}
as $F_r$ is nondecreasing. Hence, we obtain that 
\begin{equation*}
 S_r(x)\leq\frac{xF_r(x)\cdot xp_r(x)}{x^2p_r(x)}=F_r(x)\leq 1\,,\quad r< x\,.
\end{equation*}
Now let $0<x\leq r$. Note first that 
\begin{equation}\label{eq2}
 \int_x^\infty(1-F_r(t))dt\leq \int_0^\infty(1-F_r(t))dt=\E[X_{r}]=r\,.
\end{equation}
Next, consider 
\begin{align*}
 R(x):&=\frac{\int_0^x F_r(t)dt}{x^2p_r(x)}=\frac{xp_r(x)-(r-x)F_r(x)}{x^2p_r(x)}\quad\text{with}\\
 R'(x)&=\frac{xF_r(x)-(1+r-x)\int_0^x F_r(t)dt}{x^3 p_r(x)}=:\frac{N(x)}{x^3p_r(x)}\,.
\end{align*}
{
We claim that $N(x)\geq0$ for all $x\geq0$, which implies that $R$ is increasing. Note first that $N(0)=0$. Also, we have 
\begin{align*}
 N'(x)&=F_r(x)+xp_r(x)+\int_0^xF_r(t)dt-(1+r-x)F_r(x)\\
 &=xp_r(x)-(r-x)F_r(x)+\int_0^xF_r(t)dt\\
 &=2\int_0^xF_r(t)dt\geq0\,.
\end{align*}
Hence, $N$ is increasing and, thus, $N(x)\geq0$ for all $x\geq0$. This implies that 
\begin{equation*}
 \sup_{0<x\leq r}R(x)=R(r)=\frac{1}{r}
\end{equation*}
and from \eqref{eq2} we conclude that
\begin{equation*}
 \sup_{0<x\leq r}S_r(x)\leq r\sup_{0<x\leq r}R(x)=\frac{r}{r}=1\,.
 \end{equation*}
}
Thus, we have proved that 
\begin{equation*}
 \sup_{x>0}\abs{(f_h^+)'(x)}\leq 2\fnorm{h'}\,.
\end{equation*}

In order to solve \eqref{steineq1} on $(-\infty,0)$ we use the theory developed in Section 2.4 of \cite{Doe12c} (see also the unpublished manuscript \cite{Doe12a}).
There it is shown that a solution to \eqref{steineq1} on $(-\infty,0)$ is given by 
\begin{align}\label{sol2}
 f_h^-(x)&=\exp\bigl(-G_l(x)\bigr)\int_0^x \bigl(h(t)-\E[h(X_{r})]\bigr)\frac{\exp\bigl(G_l(t)\bigr)}{t}dt\notag\\
 &=\frac{1}{xq_l(x)}\int_0^x \bigl(h(t)-\E[h(X_{r})]\bigr)q_l(t)dt\,,
\end{align}
where $G_l$ is an arbitrary primitive function of $x\mapsto\frac{r-x}{x}$ on $(-\infty,0)$ and 
\begin{equation*}
 q_l(x):=\frac{\exp(G_l(x))}{x}\,,\quad x<0\,.
\end{equation*}
Also $f_h^-$ can be continuously extended to $(-\infty,0]$ by letting 
\begin{equation}\label{limzerom}
f_h^-(0):=\frac{h(0)-\E\bigl[h(X_r)\bigr]}{r}\,. 
\end{equation}
This follows from Proposition 2.4.28 of \cite{Doe12c} (or Proposition 2.22 of \cite{Doe12a}). Again, by the continuity of $h$, it is easy to see that $f_h^-$ is continuously differentiable on $(-\infty,0)$.
We choose 
\begin{equation*}
 G_l(x):=r\log(-x)-x\,,\quad x<0\,,
\end{equation*}
yielding 
\begin{equation*}
 q_l(x)=-(-x)^{r-1}e^{-x}\,,\quad x<0\,.
\end{equation*}
Note that $q_l(x)<0$ for all $x\in (-\infty,0)$. Furthermore, we define the function $Q_l$ on $(-\infty,0)$ by 
\begin{equation*}
 Q_l(x):=\int_0^x q_l(t)dt=\int_x^0(-q_l(t))dt>0\,.
\end{equation*}
By taking its derivative we see that $Q_l$ is decreasing on $(-\infty,0)$. From Corollary 2.4.36 in \cite{Doe12c} (or Corollary 2.28 of \cite{Doe12a}) we have 
\[\sup_{x<0}\abs{f_h^-(x)}\leq \fnorm{h'}\]
and, for each $x\in(-\infty,0)$, that 
{
\begin{align}
 \abs{(f_h^-)'(x)}&\leq 2\fnorm{h'}\frac{(r-x)\Bigl(-xQ_l(x)+\int_0^xtq_l(t)dt\Bigr)}{-x^2q_l(x)}\notag\\
 &=2\fnorm{h'}\frac{(r-x)\int_x^0 Q_l(t)dt}{-x^2q_l(x)}\notag\\
 &=:2\fnorm{h'}U(x)\label{eq3}\,.
\end{align}
Define the function $T$ as well as $q_u$ and $Q_u$ on $(0,\infty)$ by $T(y):=U(-y)$,\\
$q_u(y):=-q_l(-y)=y^{r-1}e^y$ and 
\begin{align*}
 Q_u(y)&:=Q_l(-y)=\int_0^{-y} q_l(t)dt=-\int_0^yq_l(-s)ds\\
 &=\int_0^yq_u(s)ds\,.
\end{align*}
Then, we have 
\begin{align*}
 \int_{-y}^0Q_l(t)dt&=-\int_y^0Q_l(-s)ds=\int_0^y Q_u(s)ds=\int_0^y\int_0^tq_u(t)dt\,ds\\
 &=\int_0^y(y-t)q_u(t)dt=yQ_u(y)-\int_0^y tq_u(t)dt
\end{align*}
as well as the representations
\begin{align}
 T(y)&=\frac{(r+y)\int_0^yQ_u(s)ds}{y^2 q_u(y)}=\frac{(r+y)\int_0^yQ_u(s)ds}{y^{r+1}e^y}\label{eq4a}\\
 &=\frac{(r+y)\int_0^y(y-t)q_u(t)dt}{y^{r+1}e^y}=\frac{(r+y)(yQ_u(y)-\int_0^ytq_u(t)dt)}{y^{r+1}e^y}\label{eq4b}
\end{align}
Note that, using de l'H\^{o}pital's rule, we obtain
\begin{align}\label{eq4c}
 \lim_{y\downarrow0}T(y)&=r\lim_{y\downarrow0}\frac{\int_0^yQ_u(s)ds}{y^{r+1}e^y} 
 =r\lim_{y\downarrow0}\frac{Q_u(y)}{y^re^y(r+1+y)}\notag\\
 &=\frac{r}{r+1}\lim_{y\downarrow0}\frac{Q_u(y)}{y^re^y}=\frac{r}{r+1}\lim_{y\downarrow0}\frac{y^{r-1}e^y}{y^{r-1}e^y(r+y)}\notag\\
 &=\frac{r}{r+1}\lim_{y\downarrow0}\frac{1}{r+y}=\frac{1}{r+1}
\end{align}
as well as
\begin{align}\label{eq4d}
 \lim_{y\to\infty}T(y)&=\lim_{y\to\infty}\frac{r+y}{y}\cdot\lim_{y\to\infty}\frac{\int_0^yQ_u(s)ds}{y^{r}e^y} 
 =\lim_{y\to\infty}\frac{y^{r-1}e^r}{(r+y)y^{r-1}e^y}\notag\\
 &=\lim_{y\to\infty}\frac{1}{r+y}=0\,.
\end{align}
By the continuity of $T$, from \eqref{eq4c} and \eqref{eq4d} we already conclude that 
\begin{equation}\label{uandt}
\sup_{x<0}U(x)=\sup_{y>0}T(y)<\infty\,.
\end{equation}
Hence, it remains to deal with the local maxima of the function $T$. Note that 
\begin{align*}
 T'(y)&=\frac{y(r+y)Q_u(y)+\int_0^yQ_u(s)ds\;\bigl(y-(r+1+y)(r+y)\bigr)}{y^{r+2}e^y}\\
 &=\frac{y(r+y)Q_u(y)-\int_0^yQ_u(s)ds\;(y^2+2ry+r^2+r)}{y^{r+2}e^y}\,.
\end{align*}
If $y_0\in(0,+\infty)$ is a locally maximal point of $T$, then $T'(y_0)=0$. This implies that   
\begin{align*}
 \int_0^{y_0}Q_u(s)ds&=\frac{y_0(r+y_0)Q_u(y_0)}{y_0^2+2ry_0+r^2+r}\quad\text{and}\\
 T(y_0)&=\frac{(r+y_0)^2Q_u(y_0)}{y_0^re^{y_0}(y_0^2+2ry_0+r^2+r)}\leq\frac{Q_u(y_0)}{y_0^re^{y_0}}\,.
\end{align*}
Define the function $T_2$ on $(0,\infty)$ by 
\begin{equation*}
 T_2(y):=\frac{Q_u(y)}{y^re^{y}}\,.
\end{equation*}
Then, the above discussion as well as \eqref{eq4c} and \eqref{eq4d} show that 
\begin{align}\label{tbound1}
 \sup_{y>0}T(y)&\leq\max\Biggl(\lim_{y\downarrow0}T(y),\,\lim_{y\to\infty}T(y),\,\sup_{y>0}T_2(y)\Biggr)\notag\\
 &=\max\Biggl(\frac{1}{r+1},\,\sup_{y>0}T_2(y)\Biggr)\,.
\end{align}
Using the explicit expression of $Q_u$, as well as the elementary estimate $1\leq e^s\leq e^y$ for every $0\leq s\leq y$, one deduces immediately that
\begin{equation}\label{tbound}
\frac{ e^{-y}}{r}\leq  T_2(y)\leq \frac1r, \quad y>0.
\end{equation}

%
%
%
%
}

\noindent Hence, from \eqref{eq3}, \eqref{uandt} and \eqref{tbound} we conclude that for all $x\in(-\infty,0)$ we have
\begin{equation}\label{eq5}
 \abs{(f_h^-)'(x)}\leq\frac{2}{r}\fnorm{h'}\,.
\end{equation}
Now, we define the function $f_h$ on $\R$ by letting 
\begin{equation*}
 f_h(x):=\begin{cases}
          f_h^-(x)\,,& x\leq0\\
          f_h^+(x)\,,&x\geq0\,.
         \end{cases}
\end{equation*}
Note that \eqref{limzerop} and \eqref{limzerom} imply that $f_h$ is well-defined. As the continuous concatenation of two Lipschitz-continuous functions, we recognize $f_h$ to be Lipschitz-continuous on $\R$ with 
\begin{align*}
 \fnorm{f_h}&\leq\max\Bigl(\sup_{x\leq0}\abs{f_h^-(x)}, \sup_{x\geq0}\abs{f_h^+(x)}\Bigr)\leq \fnorm{h'}\quad\text{and}\\
 \fnorm{f_h'}&\leq \max\Bigl(\sup_{x<0}\abs{(f_h^-)'(x)}, \sup_{x>0}\abs{(f_h^+)'(x)}\Bigr)\leq \max\Bigl(2,\frac{2}{r}\Bigr)\fnorm{h'}\,.
\end{align*}
This finishes the proof of (a).

To prove (b) we assume that $h$ is continuously differentiable and that both $h$ and its derivative $h'$ are Lipschitz-continuous.
The identity 
\begin{equation*}
 xf_h'(x)+(r-x)f_h(x)=h(x)-\E\bigl[h(X_{r})\bigr]
\end{equation*}
implies that $f_h$ is continuously differentiable on both of the two intervals $(-\infty,0)$ and $(0,\infty)$ and differentiating yields that $f_h'$ solves  
\begin{equation}\label{steineqprime}
 xg'(x)+(r+1-x)g(x)=h'(x)+f_h(x)=:h_2(x)
\end{equation}
on both intervals $(-\infty,0)$ and $(0,\infty)$. Note that \eqref{steineqprime} is the Stein equation for the distribution $\Gamma(r+1,1)$ corresponding to the test function $h_2$.
We already know from part (a) that $f_h'$ is bounded by $2\max(1,r^{-1})\fnorm{h'}$. Also, Proposition 3.17 of \cite{DoeBeta} implies that $h_2$ is centered with respect to the $\Gamma(r+1,1)$ distribution. Hence, as $h_2$ 
is Lipschitz-continuous with Lipschitz-constant 
\begin{equation}\label{lipconsh2}
 \fnorm{h_2'}\leq\fnorm{h''}+\fnorm{f_h'}\leq \fnorm{h''}+2\max(1,r^{-1})\fnorm{h'}
\end{equation}
we know from part (a) applied to the distribution $\Gamma(r+1,1)$ that there is a bounded solution $g_{h_2}$ of \eqref{steineqprime}. 
Since $f_h'$ is bounded on both of the intervals $(-\infty,0)$ and $(0,\infty)$, it thus follows from Lemma \ref{unique} that 
$f_h'(x)=g_{h_2}(x)$ for all $x\not=0$. Since $g_{h_2}$ is continuous at $0$ we know from the analogs of \eqref{limzerop} and \eqref{limzerom} for $g_{h_2}$ that 
\begin{align*}
\lim_{x\uparrow0}f_h'(x)&=\lim_{x\uparrow0}g_{h_2}(x)=g_{h_2}(0)=\lim_{x\to0}g_{h_2}(x)=\frac{h_2(0)-\E\bigl[h_2(X_{r+1,1})\bigr]}{r+1}=\frac{h_2(0)}{r+1}\\
 &=\frac{h'(0)}{r+1}+\frac{h(0)-\E[h(X_{r,1})]}{r(r+1)}\,,
\end{align*}
and, similarly we conclude that 
\begin{equation*}
 \lim_{x\downarrow0}f_h'(x)=\lim_{x\downarrow0}g_{h_2}(x)=g_{h_2}(0)=\frac{h'(0)}{r+1}+\frac{h(0)-\E[h(X_{r,1})]}{r(r+1)}\,.
\end{equation*}
By Lemma \ref{fprimelemma} this implies that $f_h$ is continuously differentiable on $\R$ with $f_h'(0)=g_{h_2}(0)$.
Hence, we have $f_h'=g_{h_2}$ and from part (a) and \eqref{lipconsh2} we conclude that $f_h'$ is Lipschitz-continuous with 
\begin{align*}
 \fnorm{f_h''}&=\fnorm{g_{h_2}'}\leq 2\max\Bigl(1,\frac{1}{r+1}\Bigr) \fnorm{h_2'} =2\fnorm{h_2'}\\
 &\leq 2\Bigl(\fnorm{h''}+2\max\bigl(1,r^{-1}\bigr)\fnorm{h'}\Bigr)\\
 &=2\fnorm{h''}+4\max\bigl(1,r^{-1}\bigr)\fnorm{h'}\,.
\end{align*}
\end{proof}

\begin{remark}\label{r:explosion}

As anticipated, the factor $2/r$ in \eqref{e:finer1} cannot be replaced by a quantity that is uniformly bounded in $r$. Indeed, from the proof of Proposition 2.4.35 in \cite{Doe12c} we have the representation 
\begin{align*}
 f_h'(x)=\frac{1}{-x^2q_l(x)}\Biggl((x-r)\int_x^0Q_l(s)h'(s)ds-\int_x^\infty (1-F(s))h'(s)ds\int_x^0Q_l(t)dt\Biggr)\,,
\end{align*}
whenever $h$ is Lipschitz-continuous and $x<0$. If we take $h(x)=\min(x,0)$, then we obtain
\begin{align*}
 \int_x^0Q_l(s)h'(s)ds&=\int_0^xQ_l(s)ds\quad\text{and}\\
 -\int_x^\infty (1-F(s))h'(s)ds\int_x^0Q_l(t)dt&=x\int_x^0Q_l(t)dt
\end{align*}
because $F(s)=0$ for all $s\leq0$. This gives
\begin{equation*}
 f_h'(x)=\frac{(2x-r)\int_x^0 Q_l(t)dt}{-x^2q_l(x)}
\end{equation*}
and, hence, straightforward estimates yield that, for $x<0$,
\begin{equation*}
 \abs{f_h'(x)}=\frac{(r-2x)\int_x^0 Q_l(t)dt}{-x^2q_l(x)}\geq e^x\frac{r-2x}{r(r+1)}\,.
\end{equation*}
In particular, this implies that
\begin{equation*}
 \sup_{x<0}\abs{f_h'(x)}\geq \abs{f_h'(-1/2)}\geq e^{-1/2}\frac{1}{r}\,.
\end{equation*}
By mollifying the Lipschitz function $h(x)=\min(x,0)$, one can also construct a function $h^*\in\mathcal{H}_2$ such that $\sup_{x<0}\abs{f_{h^*}'(x)}\geq e^{-1/2}\frac{1}{2r}$. This justifies the remark following Theorem \ref{maintheo}.
\end{remark}

\section{Proof of Theorem \ref{maintheo}}\label{proofmt}
We are going to apply the bound \eqref{pluginbound} from Theorem \ref{plugin} with $\fnorm{h'},\fnorm{h''}\leq1$ to the $\sigma$-field $\mathcal{G}=\sigma(X_1,\dots,X_n)$
and to the following exchangeable pair $(W,W')$ which has already been used in \cite{DP16}:  
 Let $Y:=(Y_j)_{1\leq j\leq n}$ be an independent copy of $X:=(X_j)_{1\leq j\leq n}$ and let $\alpha$ be uniformly distributed 
on $\{1,\dotsc,n\}$ such that $X,Y$ and $\alpha$ are jointly independent. Letting, for $j=1,\dotsc,n$, 
\begin{equation*}
 X_j':=\begin{cases}
        Y_j\,,&\text{if } \alpha=j\\
        X_j\,,&\text{if }\alpha\not=j
       \end{cases}
\end{equation*}
and
\begin{equation*}
 X':=(X_1',\dotsc,X_n')
\end{equation*}
it is easy to see that the pair $(X,X')$ is exchangeable.
Finally, as exchangeability is preserved under functions, letting 
\begin{align*}
 W':=\psi(X_1',\dotsc,X_n')&=\sum_{j=1}^n 1_{\{\alpha=j\}}\Biggl(\sum_{J:j\notin J}W_J+\sum_{J:j\in J}W_J^{(j)}\Biggr)\\
 &=:\sum_{J:\alpha\notin J}W_J+\sum_{J:\alpha\in J}W_J^{(\alpha)}\,,
\end{align*}
the pair $(W,W')$ is exchangeable. Here, for $J=\{j_1,\dotsc,j_m\}\subseteq[n]$ with $1\leq j_1<j_2<\dotsc<j_m\leq n$ and $j=j_k\in J$, we write
\begin{equation*}
 W_J^{(j)}:=\psi_J(X_{j_1},\dotsc,X_{j_{k-1}},Y_{j_k},X_{j_{k+1}},\dotsc,X_{j_m})\,.
\end{equation*}

From Lemma 2.2 of \cite{DP16} we know that \eqref{linreg} holds for the pair $(W,W')$ with $R=0$ and $\lambda=d/n$.
Also, if we denote by 
\begin{equation*}
 W^2=\sum_{\substack{M\subseteq[n]:\\\abs{M}\leq 2d}}U_M
\end{equation*}
the Hoeffding decomposition of $W^2$, then Lemma 2.6 of \cite{DP16} states that 
\begin{equation}\label{hdcond}
 \frac{n}{2d}\E\bigl[(W'-W)^2\,\bigl|\,X\bigr]=\sum_{\substack{M\subseteq[n]:\\\abs{M}\leq 2d-1}}a_M U_M\,,
\end{equation}
where 
\[a_M:=1-\frac{\abs{M}}{2d}\,,\quad M\subseteq[n]\quad\text{such that }\abs{M}\leq 2d\,.\]
Hence, we have the following Hoeffding decomposition \eqref{HDS} of $S$:
\begin{align}\label{HDS}
 S&=\frac{n}{2d}\E\bigl[(W'-W)^2\,\bigl|\,X\bigr]-2(W+\nu)=\sum_{\substack{M\subseteq[n]:\\\abs{M}\leq 2d-1}}\Bigl(1-\frac{\abs{M}}{2d}\Bigr)U_M -2W-2\nu\notag\\
&=\sum_{\substack{M\subseteq[n]:\\1\leq\abs{M}\leq 2d-1}}\Bigl(1-\frac{\abs{M}}{2d}\Bigr)U_M-2\sum_{J\in\D_d}W_J\notag\\
&=\sum_{\substack{M\subseteq[n]:\\1\leq\abs{M}\leq 2d-1\,,\\\abs{M}\not=d}}\Bigl(1-\frac{\abs{M}}{2d}\Bigr)U_M+\frac{1}{2}\sum_{J\in\D_d}\bigl(U_J-4W_J\bigr)\\
&=:S_1+\frac{1}{2}S_2\notag\,.
\end{align}
Here we have used that $U_\emptyset=\E[W^2]=2\nu$. By the orthogonality of the terms appearing in the Hoeffding decomposition we thus obtain that 
\begin{equation*}
 \Var(S)=\Var(S_1)+\frac{1}{4}\Var(S_2)\,.
\end{equation*}
From the orthogonality of the Hoeffding decomposition, we conclude that 
\begin{align}\label{3mform}
 \E[W^3]&=\sum_{J\in\D_d}\sum_{\substack{M\subseteq[n]:\\\abs{M}\leq 2d}}\E\bigl[W_J U_M\bigr]=\sum_{J\in\D_d}\E\bigl[W_J U_J\bigr]=\sum_{J\in\D_d}\Cov(W_J,U_J)\,.
\end{align}

Before we proceed, we need an auxiliary lemma which expresses the fourth moment of $W$ in terms of the exchangeable pair $(W,W')$. We first state a more general 
lemma, whose statement is in fact only a slight generalization of one of the key relations in Stein's method of exchangeable pairs (see \cite{St86} or \cite{CGS}) will be very useful. 
\begin{lemma}\label{exlemma1}
 Let $(W,W')$ be an exchangeable pair of real-valued random variables such that, for some $\lambda>0$, \eqref{linreg} is satisfied with $R=0$ and let $g$ be an absolutely continuous function on $\R$ with 
 $\E\bigl[\bigl(1+\abs{W}+\abs{W'}\bigr)\abs{g(W)}\bigr]<\infty$.
 Then, 
 \begin{equation*}
 \E\bigl[Wg(W)\bigr]=\E\Bigl[g'(W) \frac{1}{2\lambda}\E\bigl[(W'-W)^2\,\bigl|\,\G\bigr]\Bigr] +E\,,
 \end{equation*}
 where
 \begin{equation*}
  E:=\frac{1}{2\lambda}\E\Bigl[(W'-W)^2\int_0^1\Bigl(g'\bigl(W+t(W'-W)\bigr)-g'(W)\Bigr)dt\Bigr]
 \end{equation*}
is a remainder term.
\end{lemma}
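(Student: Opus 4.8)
The plan is to exploit the standard antisymmetrization device of Stein's method of exchangeable pairs. Consider the function $F(w,w') := (w'-w)\bigl(g(w')+g(w)\bigr)$, which is manifestly antisymmetric, i.e. $F(w',w) = -F(w,w')$. Since $(W,W')$ and $(W',W)$ have the same law, this forces $\E[F(W,W')] = 0$, provided $\E\abs{F(W,W')}$ is finite, and it is precisely here that the integrability hypothesis enters: by exchangeability one has $\E\bigl[(1+\abs{W}+\abs{W'})\abs{g(W')}\bigr] = \E\bigl[(1+\abs{W}+\abs{W'})\abs{g(W)}\bigr] < \infty$, and since $\abs{F(W,W')} \le (\abs{W}+\abs{W'})(\abs{g(W)}+\abs{g(W')})$, the random variable $F(W,W')$ is integrable. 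Hence $\E\bigl[(W'-W)g(W')\bigr] = -\E\bigl[(W'-W)g(W)\bigr]$.

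Next I would Taylor-expand to first order with integral remainder. Since $g$ is absolutely continuous, for every outcome one has $g(W') - g(W) = \int_W^{W'} g'(s)\,ds = (W'-W)\int_0^1 g'\bigl(W + t(W'-W)\bigr)\,dt$ (the substitution $s = W + t(W'-W)$). Multiplying by $W'-W$, taking expectations (Fubini being justified once the integrand is seen to be absolutely integrable, again via the moment hypothesis together with the local boundedness of $g$), and inserting the antisymmetry identity, one obtains
\[
0 = 2\,\E\bigl[(W'-W)g(W)\bigr] + \E\Bigl[(W'-W)^2\int_0^1 g'\bigl(W+t(W'-W)\bigr)\,dt\Bigr].
\]
Now the regression property is brought in: because $\sigma(W)\subseteq\G$, the variable $g(W)$ is $\G$-measurable, so $\E\bigl[(W'-W)g(W)\bigr] = \E\bigl[g(W)\,\E[W'-W\mid\G]\bigr] = -\lambda\,\E[Wg(W)]$ by \eqref{linreg} with $R=0$. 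Substituting, dividing by $2\lambda$, and splitting $g'\bigl(W+t(W'-W)\bigr) = g'(W) + \bigl(g'(W+t(W'-W)) - g'(W)\bigr)$ inside the $dt$-integral peels off the term $\frac{1}{2\lambda}\E[(W'-W)^2 g'(W)]$; since $g'(W)$ is $\G$-measurable this equals $\E\bigl[g'(W)\,\frac{1}{2\lambda}\E[(W'-W)^2\mid\G]\bigr]$, whereas the remaining piece is exactly the error term $E$ appearing in the statement. This yields the claimed identity.

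The main point to be handled carefully is bookkeeping of integrability — ensuring that every expectation written above converges absolutely, so that the antisymmetry argument and Fubini's theorem apply — which is exactly what the assumption $\E\bigl[(1+\abs{W}+\abs{W'})\abs{g(W)}\bigr]<\infty$ is designed to deliver, after transferring it to $g(W')$ by exchangeability. A secondary, minor care is to fix once and for all a version of $g'$ (the a.e.-defined derivative of the absolutely continuous $g$) and to observe that $\int_0^1 g'(W+t(W'-W))\,dt$ coincides with the difference quotient $\bigl(g(W')-g(W)\bigr)/(W'-W)$ on $\{W'\ne W\}$ and is multiplied by the factor $(W'-W)^2$, so that the null set on which $g'$ may be ill-defined causes no difficulty.
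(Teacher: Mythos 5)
Your proof is correct and follows exactly the standard antisymmetrization argument for exchangeable pairs that the paper itself invokes (it gives no separate proof, referring to \cite{St86} and \cite{CGS} and noting the lemma is a slight generalization of the key exchangeable-pairs relation): antisymmetry of $(w'-w)(g(w')+g(w))$, a first-order Taylor expansion with integral remainder, and the regression property \eqref{linreg} with $R=0$. The integrability bookkeeping you supply via the hypothesis $\E[(1+\abs{W}+\abs{W'})\abs{g(W)}]<\infty$ and exchangeability is precisely what is needed, so nothing is missing.
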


\begin{lemma}\label{exlemma2}
 Let $(W,W')$ be an exchangeable pair of real-valued random variables in $L^4(\Prob)$ such that, for some $\lambda>0$, \eqref{linreg} is satisfied with $R=0$. Then, 
 \begin{align}
  \E[W^4]&=3\E\Bigl[W^2\frac{1}{2\lambda}\E\bigl[(W'-W)^2\,\bigl|\,\G\bigr]\Bigr]-\frac{1}{4\lambda}\E\bigl[(W'-W)^4\bigr]\label{4mformula}\quad\text{and}\\
  \E[W^3]&=2\E\Bigl[W\frac{1}{2\lambda}\E\bigl[(W'-W)^2\,\bigl|\,\G\bigr]\Bigr]\label{3mformula}\,.
 \end{align}
\end{lemma}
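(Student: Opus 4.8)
The plan is to apply Lemma \ref{exlemma1} with appropriate choices of the test function $g$, exploiting exchangeability and the regression identity \eqref{linreg} with $R=0$. Throughout, all conditional expectations are with respect to the $\sigma$-field $\G$ appearing in the statement of the regression equations, and we abbreviate $\Delta:=W'-W$.

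First I would prove \eqref{3mformula}. Apply Lemma \ref{exlemma1} with $g(x)=x^2$, so that $g'(x)=2x$ is affine. Then the remainder term $E$ in Lemma \ref{exlemma1} vanishes identically, since $g'(W+t\Delta)-g'(W)=2t\Delta$ and $\int_0^1 2t\,dt\cdot\Delta^3$ would appear, which is not zero pointwise — so instead one uses that $g'$ being affine makes $g'(W+t\Delta)-g'(W)=2t\Delta$, hence $E=\frac{1}{2\lambda}\E[\Delta^2\int_0^1 2t\Delta\,dt]=\frac{1}{2\lambda}\E[\Delta^3]$, and by exchangeability $\E[\Delta^3]=\E[(W'-W)^3]=-\E[(W'-W)^3]=0$. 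Thus $E=0$ and Lemma \ref{exlemma1} gives directly $\E[W\cdot W^2]=\E[2W\cdot\frac{1}{2\lambda}\E[\Delta^2\,|\,\G]]$, which is \eqref{3mformula}. The integrability hypothesis $\E[(1+\abs{W}+\abs{W'})\abs{g(W)}]<\infty$ follows from $W,W'\in L^4(\Prob)$ by H\"older's inequality, which also legitimises all the manipulations below.

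Next I would prove \eqref{4mformula}, which is the more delicate part. Apply Lemma \ref{exlemma1} with $g(x)=x^3$, so $g'(x)=3x^2$; here $g'$ is no longer affine, so the remainder $E$ genuinely contributes. One computes $g'(W+t\Delta)-g'(W)=3(2tW\Delta+t^2\Delta^2)=6tW\Delta+3t^2\Delta^2$, so that
\begin{equation*}
E=\frac{1}{2\lambda}\E\Bigl[\Delta^2\bigl(3W\Delta+\Delta^2\bigr)\Bigr]=\frac{3}{2\lambda}\E[W\Delta^3]+\frac{1}{2\lambda}\E[\Delta^4].
\end{equation*}
The term $\E[W\Delta^3]$ is handled by exchangeability: writing $2W=(W+W')-(W'-W)$, one gets $\E[W\Delta^3]=\tfrac12\E[(W+W')\Delta^3]-\tfrac12\E[\Delta^4]$, and $\E[(W+W')\Delta^3]=0$ since $(W,W')\mapsto(W',W)$ fixes $W+W'$ and sends $\Delta^3$ to $-\Delta^3$. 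Hence $\E[W\Delta^3]=-\tfrac12\E[\Delta^4]$, so $E=\frac{3}{2\lambda}(-\tfrac12\E[\Delta^4])+\frac{1}{2\lambda}\E[\Delta^4]=-\frac{1}{4\lambda}\E[\Delta^4]$. Plugging this into Lemma \ref{exlemma1} with $g(x)=x^3$ yields
\begin{equation*}
\E[W^4]=\E\Bigl[3W^2\cdot\frac{1}{2\lambda}\E[\Delta^2\,|\,\G]\Bigr]-\frac{1}{4\lambda}\E[\Delta^4],
\end{equation*}
which is exactly \eqref{4mformula}.

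The main obstacle I anticipate is purely bookkeeping of the exchangeability cancellations in the remainder term — keeping careful track of which symmetric/antisymmetric combinations of $W,W'$ appear and justifying that the cross terms vanish — together with checking that every expectation in sight is finite (so that splitting sums and using Fubini is legitimate), which reduces to the assumed $L^4$ integrability via H\"older. No deep input beyond Lemma \ref{exlemma1} is needed.
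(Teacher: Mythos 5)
Your proposal is correct and follows essentially the same route as the paper: both results are obtained by applying Lemma \ref{exlemma1} with $g(x)=x^3$ and $g(x)=x^2$ respectively, computing the remainder term $E$ explicitly, and using exchangeability to show $\E[(W'-W)^3]=0$ and $\E[W(W'-W)^3]=-\tfrac12\E[(W'-W)^4]$ (the paper derives the latter by writing $\E[(W'-W)^4]=\E[W'(W'-W)^3]-\E[W(W'-W)^3]=-2\E[W(W'-W)^3]$, which is the same cancellation as your $W+W'$ symmetrization). Apart from some self-correcting phrasing in your treatment of the $g(x)=x^2$ remainder, the argument matches the paper's proof.
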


\begin{proof}
 The proof of \eqref{4mformula} applies Lemma \ref{exlemma1} with $g(x)=x^3$ leading to the remainder term
 \begin{align*}
  E&=\frac{3}{2\lambda}\E\Bigl[(W'-W)^2\int_0^1\Bigl(2tW(W'-W)+t^2(W'-W)^2\Bigr)dt\Bigr]\\
  &=\frac{3}{2\lambda}\E\bigl[W(W'-W)^3\bigr]+\frac{1}{2\lambda}\E\bigl[(W'-W)^4\bigr]\,.
\end{align*}
By exchangeability we obtain that 
\begin{equation*}
 \E\bigl[(W'-W)^4\bigr]=\E\bigl[W'(W'-W)^3\bigr]-\E\bigl[W(W'-W)^3\bigr]=-2\E\bigl[W(W'-W)^3\bigr]
\end{equation*}
yielding the claim. In order to prove \eqref{3mformula} we apply Lemma \ref{exlemma1} with $g(x)=x^2$ leading to the remainder term
\begin{align*}
 E&=\frac{1}{\lambda}\E\Bigl[(W'-W)^2\int_0^1t(W'-W)dt\Bigr] 
 =\frac{1}{2\lambda}\E\bigl[(W'-W)^3\bigr]=0\\
\end{align*}
again by exchangeability.
\end{proof}

Now, using \eqref{4mformula} we obtain
\begin{align}\label{4mform}
 \E[W^4]&=3\E\Bigl[W^2\frac{n}{2d}\E\bigl[(W'-W)^2\,\bigl|\,X\bigr]\Bigr]-\frac{n}{4d}\E\bigl[(W'-W)^4\bigr]\notag\\
 &=3\sum_{\substack{M,N\subseteq[n]:\\\abs{M},\abs{N}\leq 2d}}\Bigl(1-\frac{\abs{M}}{2d}\Bigr)\E\bigl[U_N U_M\bigr]-\frac{n}{4d}\E\bigl[(W'-W)^4\bigr]\notag\\
 &=12\nu^2+3\sum_{\substack{M\subseteq[n]:\\1\leq\abs{M}\leq 2d-1}}\Bigl(1-\frac{\abs{M}}{2d}\Bigr)\Var(U_M)-\frac{n}{4d}\E\bigl[(W'-W)^4\bigr]\,.
 \end{align}
where we have used that $U_\emptyset=\E[W^2]=2\nu$. We have 
\begin{align*}
 \Var(S_2)&=\sum_{J\in\D_d}\Var\bigl(U_J-4W_J\bigr)=\sum_{J\in\D_d}\bigl(\Var(U_J)+16\Var(W_J)-8\Cov(U_J,W_J)\bigr)\\
 &=\sum_{J\in\D_d}\Var(U_J)+32\nu-8\sum_{J\in\D_d}\Cov(U_J,W_J)\notag\\
 &=\sum_{J\in\D_d}\Var(U_J)+32\nu-8\E[W^3]\,,
\end{align*}
where the last equality holds by virtue of \eqref{3mform}. Hence, we have
\begin{align}\label{vars}
\Var(S)&=\Var(S_1)+\frac{1}{4}\Var(S_2)\notag\\
&=\sum_{\substack{M\subseteq[n]:\\1\leq\abs{M}\leq 2d-1\,,\\\abs{M}\not=d}}\Bigl(1-\frac{\abs{M}}{2d}\Bigr)^2\Var(U_M)+\frac14\sum_{J\in\D_d}\Var(U_J)
+8\nu-2\E[W^3]\,.
\end{align}

From \eqref{4mform} and \eqref{vars}, using 
\[\Bigl(1-\frac{\abs{M}}{2d}\Bigr)^2\leq\Bigl(1-\frac{\abs{M}}{2d}\Bigr)\quad\text{for all }M\subseteq[n]\text{ such that }\abs{M}\leq 2d\,,\]
we see that 
\begin{align*}
 &\E[W^4]-12\E[W^3]-12\nu^2+48\nu\\
 &=3\sum_{\substack{M\subseteq[n]:\\1\leq\abs{M}\leq 2d-1}}\Bigl(1-\frac{\abs{M}}{2d}\Bigr)\Var(U_M)-12\E[W^3]+48\nu-\frac{n}{4d}\E\bigl[(W'-W)^4\bigr]\\
 &=3\sum_{\substack{M\subseteq[n]:\\1\leq\abs{M}\leq 2d-1\,,\\\abs{M}\not=d}}\Bigl(1-\frac{\abs{M}}{2d}\Bigr)\Var(U_M)+\frac32\sum_{J\in\D_d}\Var(U_J)-12\E[W^3]+48\nu\\
 &\quad-\frac{n}{4d}\E\bigl[(W'-W)^4\bigr]\\
&\geq 3\biggl(\Var(S_1)+ \frac{1}{2}\Bigl( \sum_{J\in\D_d}\Var(U_J)-8\E[W^3]+32\nu\Bigr)\biggr)-\frac{n}{4d}\E\bigl[(W'-W)^4\bigr]\\
&=3\Bigl(\Var(S_1)+\frac{1}{2}\Var(S_2)\Bigr)-\frac{n}{4d}\E\bigl[(W'-W)^4\bigr]\\
&\geq 3\Var(S)-\frac{n}{4d}\E\bigl[(W'-W)^4\bigr]\,.
\end{align*}
Hence, we obtain that
\begin{align}\label{bound11}
 \Var(S)&\leq \frac{1}{3}\Bigl(\E[W^4]-12\E[W^3]-12\nu^2+48\nu\Bigr) +\frac{n}{12d}\E\bigl[(W'-W)^4\bigr]
 \end{align}
and it thus remains to find a bound on 
\[\frac{n}{d}\E\bigl[(W'-W)^4\bigr]\,.\]

From the definition of the coupling $(W,W')$ and by the inequality $(a+b)^4\leq 8(a^4+b^4)$ we conclude that  
\begin{align}\label{t22}
 \E\babs{W'-W}^4&=\E\Babs{\sum_{J\in\D_d:\alpha\in J}\bigl(W_J^{(\alpha)}-W_J\bigr)}^4=\frac{1}{n}\sum_{j=1}^n\E\biggl[\Bigl(\sum_{J\in\D_d:j\in J}\bigl(W_J^{(j)}-W_J\bigr)\Bigr)^4\biggr]\notag\\
 &\leq\frac{8}{n}\sum_{j=1}^n\E\biggl[\Bigl(\sum_{J\in\D_d:j\in J}W_J^{(j)}\Bigr)^4+\Bigl(\sum_{J\in\D_d:j\in J}W_J\Bigr)^4\biggr]\notag\\
 &=\frac{16}{n}\sum_{j=1}^n\E\biggl[\Bigl(\sum_{J\in\D_d:j\in J}W_J\Bigr)^4\biggr]
 =\frac{16}{n}\sum_{j=1}^n\sum_{\substack{J,K,L,M\in\D_d:\\ j\in J\cap K\cap L\cap M}}\E\bigl[W_JW_KW_LW_M\bigr]\notag\\
 &=\frac{16}{n}\sum_{\substack{(J,K,L,M)\in\D_d^4:\\ J\cap K\cap L\cap M\not=\emptyset}}\abs{J\cap K\cap L\cap M}\;\E\bigl[W_JW_KW_LW_M\bigr]\,.
 \end{align}
 Here, we have used the fact that the sums
 \begin{equation*}
  \sum_{J\in\D_d:j\in J}W_J^{(j)}\quad\text{and}\quad \sum_{J\in\D_d:j\in J}W_J
 \end{equation*}
are identically distributed for each $j\in[n]$. Now, by the definition of $D$ and by the generalized H\"older inequality, for each $(J,K,L,M)\in\D_d^4$ we have 
\begin{align*}
 \babs{\E\bigl[W_JW_KW_LW_M\bigr]}&\leq \Bigl(\E\bigl[W_J^4\bigr]\E\bigl[W_K^4\bigr]\E\bigl[W_L^4\bigr]\E\bigl[W_M^4\bigr]\Bigr)^{1/4}\notag\\
 &\leq\Bigl(D\sigma_J^4D\sigma_K^4D\sigma_L^4D\sigma_M^4\Bigr)^{1/4}\leq D\sigma_J\sigma_K\sigma_L\sigma_M\,.
\end{align*}
Proposition 2.9 of \cite{DP16} implies that 
\begin{equation*}
 \sum_{\substack{(J,K,L,M)\in\D_d^4:\\ J\cap K\cap L\cap M\not=\emptyset}}\sigma_J\sigma_K\sigma_L\sigma_M\leq C_d\rho_n^2\,,
\end{equation*}
where the finite constant $C_d$ only depends on $d$.
Thus, from \eqref{t22} we conclude that 
\begin{align}\label{term2bound}
 \E\babs{W'-W}^4&\leq\frac{16}{n} C_d D_n\rho_n^2\,. 
\end{align}
From \eqref{bound11} and \eqref{term2bound} we have 
\begin{align}\label{bound12}
 \Var(S)&\leq \frac{1}{3}\Babs{\E[W^4]-12\E[W^3]-12\nu^2+48\nu}+\frac{4}{3d}C_dD_n\rho_n^2\,.
\end{align}
Also, from the fact that 
\begin{equation*}
 \E\bigl[(W'-W)^2\bigr]=2\lambda\E[W^2]=\frac{4d\nu}{n}
\end{equation*}
and using the Cauchy-Schwarz inequality we obtain 
\begin{align}\label{t21}
 \frac{1}{6\lambda}\E\babs{W'-W}^3&\leq \frac{n}{6d}\Bigl(\E\bigl[(W'-W)^2\bigr]\Bigr)^{1/2}\Bigl(\E\babs{W'-W}^4\Bigr)^{1/2}\notag\\
 &=\frac{\sqrt{\nu}}{3\sqrt{d}}\Bigl(n\E\babs{W'-W}^4\Bigr)^{1/2}\leq \frac{\sqrt{\nu}}{3\sqrt{d}}\sqrt{16 C_d D_n\rho_n^2}\notag\\
&=  \frac{4\sqrt{\nu}}{3\sqrt{d}}\sqrt{C_d D_n\rho_n^2}\,,
\end{align}
where we have used \eqref{term2bound} again.
Theorem \ref{maintheo} now follows from \eqref{pluginbound}, \eqref{bound12} and \eqref{t21}.

\section{Proof of Theorem \ref{poitheo}}\label{poisson}
For the sake of completeness, we will discuss some further details concerning stochastic analysis  for functionals of a Poisson measure. Throughout the section, we work in the same framework as the one outlined in Section \ref{ss:poissintro}.

\smallskip

For an integer $p\geq1$ we denote by $L^2(\mu^p)$ the Hilbert space of all square-integrable real-valued functions on $\mathcal{Z}^p$ and we write $L^2_s(\mu^p)$ 
for the subspace of those functions in $L^2(\mu^p)$ which are $\mu^p$-a.e. symmetric. Moreover, for ease of notation, we denote by $\norm{\cdot}$ and $\langle \cdot,\cdot\rangle$ the usual norm and scalar product 
on $L^2(\mu^p)$ for whatever value of $p$. We further define $L^2(\mu^0):=\R$. For $f\in L^2(\mu^p)$ we denote by $I_p(f)$ the \textit{multiple Wiener-It\^o integral} of $f$ with respect to $\hat{\eta}$. If $p=0$, then, by convention, 
$I_0(c):=c$ for each $c\in\R$.
The following properties of multiple integrals are standard for all $p,q\geq0$:
\begin{enumerate}[1)]
 \item $I_p(f)=I_p(\tilde{f})$, where $\tilde{f}$ denotes the \textit{canonical symmetrization} of $f\in L^2(\mu^p)$.
 \item $I_p(f)\in L^2(\Prob)$.
 \item $\E\bigl[I_p(f)I_q(g)\bigr]= \delta_{p,q}\,p!\,\langle \tilde{f},\tilde{g}\rangle $, where $\delta_{p,q}$ denotes \textit{Kronecker's delta symbol}.
\end{enumerate}
For $p\geq0$, the Hilbert space consisting of all random variables $I_p(f)$, $f\in L^2(\mu^p)$, is called the \textit{$p$-th Wiener chaos} associated with $\eta$. It is a crucial fact that every $F\in L^2(\Prob)$ admits 
a unique representation 
\begin{equation}\label{chaosdec}
 F=\E[F]+\sum_{p=1}^\infty I_p(f_p)\,,
\end{equation}
where $f_p\in L_s^2(\mu^p)$, $p\geq1$, are suitable symmetric kernel functions. Identity \eqref{chaosdec} is called the \textit{chaotic decomposition} of the functional $F\in L^2(\Prob)$. Next, we briefly introduce the necessary Malliavin operators. The \textit{domain} $\dom D$ of the Malliavin derivative operator $D$ is the set of all $F\in L^2(\Prob)$ such that the chaotic decomposition \eqref{chaosdec} of $F$ satisfies $\sum_{p=1}^\infty p\,p!\norm{f_p}^2<\infty$. For such an $F$ the random function $\mathcal{Z}\ni z\mapsto D_zF\in L^2(\Prob)$ is defined via
\begin{equation*}
 D_zF=\sum_{p=1}^\infty p I_{p-1}\bigl(f_p(z,\cdot)\bigr)\,,
\end{equation*}
where $f_p(z,\cdot)$ is an a.e. symmetric function on $\mathcal{Z}^{p-1}$. Hence, $DF=(D_zF)_{z\in\mathcal{Z}}$ can be viewed as an element of $L^2\bigl(\Omega\times \mathcal{Z},\F\otimes\mathscr{Z},\Prob\otimes \mu\bigr)$. 
Note that, as $\F=\sigma(\eta)$, each $F\in L^2(\Prob)$ can be written as $F=g(\eta)$ for some measurable functional $g$. Then, for $z\in\mathcal{Z}$ we write $F_z:=g(\eta+\delta_z)$. If, furthermore, $F$ happens to be in $\dom D$, then 
it is known that for $\mu$-almost every $z\in\mathcal{Z}$ we have the important formula 
\begin{equation}\label{altDz}
D_zF=F_z-F\,. 
\end{equation}
The domain $\dom L$ of the \textit{Ornstein-Uhlenbeck generator} $L$ is the set of those $F\in L^2(\Prob)$ whose chaotic decomposition \eqref{chaosdec} verifies $\sum_{p=1}^\infty p^2\,p!\norm{f_p}^2<\infty$ and, for $F\in\dom F$, one defines
\begin{equation*}
 LF=-\sum_{p=1}^\infty p I_p(f_p)\,.
\end{equation*}
By definition, $\E[LF]=0$. The domain $\dom L^{-1}$ of the \textit{pseudo-inverse} $L^{-1}$ of $L$ is the class of mean zero elements $F$ of $L^2(\Prob)$. If $F= \sum_{p=1}^\infty I_p(f_p)$ is the chaotic decomposition of such an $F$, then it 
is defined via 
\begin{equation*}
 L^{-1} F=\sum_{p=1}^\infty\frac{1}{p}I_p(f)\,.
\end{equation*}
Note that these definitions imply that 
\begin{equation*}
 LL^{-1}F=F\quad\text{for all }F\in \dom L^{-1}\quad\text{and}\quad L^{-1} LF=F-\E[F]\quad\text{for all } F\in\dom L\,.
\end{equation*}
Finally, we review the definition \textit{Skohorod integral operator} $\delta$. 
Note that for each $u\in L^2\bigl(\Omega\times \mathcal{Z},\F\otimes\mathscr{Z},\Prob\otimes \mu\bigr)$ and each fixed $z\in\mathcal{Z}$ we have a chaotic decomposition of the type
\begin{equation}\label{uz}
 u_z=\sum_{p=0}^\infty I_p\bigl(f_p(z,\cdot)\bigr)
\end{equation}
and, for $p\geq0$, the kernel $f_p(z,\cdot)$ is an element of $L_s^2(\mu^p)$.  Then, the domain $\dom\delta$ of $\delta$ consists of those such $u\in L^2\bigl(\Omega\times \mathcal{Z},\F\otimes\mathscr{Z},\Prob\otimes \mu\bigr)$ whose kernels given by \eqref{uz} satisfy 
\begin{equation*}
 \sum_{p=0}^\infty (p+1)! \norm{f_p}_{L^2(\mu^{p+1})}^2<\infty
\end{equation*}
and, for $u\in\dom\delta$, one lets
\begin{equation*}
 \delta(u)=\sum_{p=0}^\infty I_{p+1}(f_p)\,.
\end{equation*}
The following two identities are essential for the Malliavin-Stein method on the Poisson space. The first one, the \textit{integration by parts formula}, characterizes $\delta$ as the adjoint operator of $D$:
\begin{align}
 \E\bigl[G\delta(u)\bigr]&=\E\bigl[\langle DG,u\rangle_{L^2(\mu)}\bigr]\quad\text{for all } G\in\dom D,\, u\in\dom\delta\,.\label{intparts}\\
 \delta DF&=-LF\quad\text{for all }F\in \dom L\label{commrel}.
\end{align}
We are now in the position to prove our new bounds on the Gamma approximation for functionals on the Poisson space.

\smallskip

\noindent{\bf Proof of Theorem \ref{poitheo}}.
 The proof follows the lines of the proof of Theorem 3.1 of \cite{PSTU} very closely. Fix $h\in\mathcal{H}_2$ and write $f=f_h$ for the solution to the Stein equation \eqref{steineq2} from Theorem \ref{bounds2}. Using the fact that 
 $\E[F]=0$ as well as \eqref{intparts} and \eqref{commrel} we have 
 \begin{align}\label{pp1}
  \E\bigl[Ff(F)\bigr]&=\E\bigl[\bigl(LL^{-1}F\bigr) f(F)\bigr]=\E\bigl[-\delta\bigl(DL^{-1}F\bigr)f(F)\bigr]\notag\\
  &=\E\bigl[\langle Df(F),-DL^{-1}F\rangle_{L^2(\mu)}\bigr]\,.
 \end{align}
Now, for fixed $z\in\mathcal{Z}$, using \eqref{altDz} as well as Taylor's formula, we have 
\begin{align}\label{pp2}
 D_zf(F)&=\bigl(f(Z)\bigr)_z-f(F)=f(F_z)-f(F)\notag\\
 &=f'(F)(F_z-F)+R(F_z-F)=f'(F)(D_zF)+R(D_zF)\,,
\end{align}
where $y\mapsto R(y)$ is a function which satisfies 
\[\abs{R(y)}\leq\frac{\fnorm{f''}}{2}y^2\leq\frac{\max\bigl(1,\frac{2}{\nu}\bigr)\fnorm{h'}+\fnorm{h''}}{2}y^2\leq\max\Bigl(1,\frac{1}{\nu}+\frac12\Bigr) y^2\]
by Theorem \ref{bounds2} (b). Hence, from \eqref{pp1} and \eqref{pp2} we conclude that
\begin{align*}
\E\bigl[Ff(F)\bigr]&=\E\bigl[f'(F)\langle DF,-DL^{-1}F\rangle_{L^2(\mu)}\bigr]
+\E\bigl[\langle R(DF),-DL^{-1}F\rangle_{L^2(\mu)}\bigr]
\end{align*}
yielding
\begin{align*}
&\babs{\E[h(F)]-\E[h(Z_\nu)]}=\babs{\E\bigl[2(F+\nu)f'(F)-Ff(F)\bigr]}\notag\\
&\leq\babs{\E\bigl[f'(F)\bigl(2(F+\nu)-\langle DF,-DL^{-1}F\rangle_{L^2(\mu)}\bigr]}
+\babs{\E\bigl[\langle R(DF),-DL^{-1}F\rangle_{L^2(\mu)}\bigr]}\notag\\
&\leq \max\Bigl(1,\frac{2}{\nu}\Bigr)\E\babs{2(F+\nu)-\langle DF,-DL^{-1}F\rangle_{L^2(\mu)}}
+\int_{\mathcal{Z}}\E\babs{R(D_zF) D_zL^{-1}F}\mu(dz)\notag\\
&\leq\max\Bigl(1,\frac{2}{\nu}\Bigr)\E\babs{2(F+\nu)-\langle DF,-DL^{-1}F\rangle_{L^2(\mu)}}\\
&\;+\max\Bigl(1,\frac{1}{\nu}+\frac12\Bigr)\int_{\mathcal{Z}}\E\babs{D_zF}^{2}\babs{D_zL^{-1}F}\mu(dz)\,,
\end{align*}
which in turn gives \eqref{genpb1}. Applying Cauchy-Schwarz on \eqref{genpb1} gives \eqref{genpb2}. 
The bounds \eqref{spb1} and \eqref{spb2} easily follow from these by the definitions of the Malliavin operators.\qed


\section{Proofs of Theorem \ref{t:gaussgamma} and of Proposition \ref{p:gaussgamma} }\label{gauss}

\noindent{\bf Proof of Theorem \ref{t:gaussgamma}.} We have to show that, for every 1-Lpischitz test function $h$, the quantity $ | \E[h(F)] - \E[h(Z_\nu)] |$ is bounded by the right-hand side of \eqref{e:gaussgamma}. We start by assuming that $h$ is twice continuously differentiable and such that $\|h'\|_\infty\leq 1$. Then, we can use Theorem \ref{bounds2} to deduce that that there exists a solution $f_h$ to \eqref{steineq2} such that $f_h$ is continuously differentiable, and $\|f'_h\|_\infty  \leq\max\bigl(1,\frac{2}{\nu}\bigr) \| h'\|_\infty $. It follows that
$$
 | \E[h(F)] - \E[h(Z_\nu)] | = \Babs{\E\Bigl[f_h'(F)\, \E\big \{ 2(F+\nu)-\langle DF,-DL^{-1}F\rangle_{\mathscr{H}} \, \big | \, F \big\} \Big]},
$$
where we have applied the standard integration by parts formula
$$
\E[F f_h(F)] = \E[f'_h(F)\langle DF, -DL^{-1} F\rangle_{\mathscr{H}}],
$$
as well as the definition of conditional expectation. Observe that, in view of the smoothness of $f_h$, such an integration by parts relation holds for any $F\in \mathbb{D}^{1,2}$, irrespective of the fact that $F$ has a density. To deal with a general 1-Lipschitz function $h$, one simply observes that there exists a family $\{h_\epsilon : \epsilon>0 \}$ of functions of class $C^2$ such that: (1) for each $\epsilon$ the first and second derivatives of $h_\epsilon$ are bounded, (2) $\|h'_\epsilon\|_\infty \leq \|h'\|_\infty$, and (3) $\| h-h_\epsilon\|_\infty \to 0$, as $\epsilon \to 0$ (one can take for example $h_\epsilon(x) = \E[h(x+\epsilon N)]$, where $N$ is a standard normal random variable).\qed

\medskip
 
\noindent{\bf Proof of Proposition \ref{p:gaussgamma}.} The fact that \eqref{e:sar} implies that $F_n$ converges in distribution to $Z_\nu$ follows from Theorem \ref{t:gaussgamma}, whereas the estimate \eqref{e:sar1} is an immediate consequence of \cite[Theorem 3.1]{NPo-spa} and of the fact that the Fortet-Mourier distance is bounded (by definition) by $d_1$. To conclude, we have to show that, if $F_n$ converges in distribution to $Z_\nu$, then \eqref{e:sar} is necessarily verified. In order to do that, one can reason exactly as in the proof of \cite[Theorem 3]{APP} and deduce that, if $F_n$ converges in distribution to $Z_\nu$, then, as $n\to \infty$ and for every fixed $M\in (0, \infty)$,
$$
\sup_{\phi\in \mathscr{F}_M} \E\left[ \phi(F_n) (2(F_n+\nu)-\langle DF_n,-DL^{-1}F_n\rangle_{\mathscr{H}} )\right] \to 0,
$$
where $\mathscr{F}_M$ denotes the class of all Borel functions that are bounded by 1, and with support contained in $[-M,M]$. It follows that 
\begin{eqnarray*}
&&\E\left[ \left|  \E \left\{ 2(F_n+\nu)-\langle DF_n,-DL^{-1}F_n\rangle_{\mathscr{H}}\, \big |\, F_n \right\} \right|\right] \\
&& = \sup_{\|\phi\|_\infty\leq 1} \E\left[ \phi(F_n)  \E \left\{ 2(F_n+\nu)-\langle DF_n,-DL^{-1}F_n\rangle_{\mathscr{H}}  \, \big |\, F_n \right\} \right] \\
&& \leq \sup_{\phi\in  \mathscr{F}_M} \E\left[ \phi(F_n)\,  (2(F_n+\nu)-\langle DF_n,-DL^{-1}F_n\rangle_{\mathscr{H}} )\right]\\
&& \quad\quad + \sqrt{\Prob[|F_n| >M]} \times \sup_k \E\left[ (2(F_k+\nu)-\langle DF_k,-DL^{-1}F_k\rangle_{\mathscr{H}} )^2\right]^{1/2},
\end{eqnarray*}
and the conclusion is obtained by first letting $n\to \infty$, and next letting $M\to \infty$, where one has to use the fact that
$$
\sup_k \E\left[ (2(F_k+\nu)-\langle DF_k,-DL^{-1}F_k\rangle_{\mathscr{H}} )^2\right]^{1/2}<\infty,
$$
by virtue of the usual hypercontractivity properties enjoyed by random variables living in a finite sum of Wiener chaoses --- see e.g. \cite[Corollary 2.8.15]{NouPecbook}.
\qed

{
\section{Proof of Lemma \ref{smoothing}}\label{appendix} 
The proof { refines findings from the unpublished PhD dissertation} \cite{Doe12c}. For $\rho>0$ denote by 
\begin{equation*}
 k_\rho(x):=\frac{\rho}{\sqrt{2\pi}}e^{-x^2\rho^2/2}=\rho\phi(\rho x)\,,\quad x\in\R\,,
\end{equation*}
the density of the centered normal distribution with variance $\rho^{-2}$, which we use as a mollifier. For a Lipschitz-continuous function $h$ on $\R$, denote by $h_\rho:=h\ast k_\rho=k_\rho\ast h$ the convolution of 
$h$ and $k_\rho$, given by 
\begin{equation*}
 h_\rho(x)=(h\ast k_\rho)(x)=\int_\R h(y)k_\rho(x-y)dy=\int_\R k_\rho(y)h(x-y)dy\,,\quad x\in\R\,.
\end{equation*}
Note that, according to Rademacher's theorem, $h$ is Lebesgue-a.e. differentiable with a bounded derivative. 
In what follows we denote by $h'$ an arbitrary bounded and measurable version of its derivative. 

\begin{prop}\label{moll}
Fix $\rho>0$. For any Lipschitz-continuous function $h:\R\rightarrow\R$, the function $h_\rho$ is in $C^\infty(\R)$ and for each integer $m\geq1$ we have:
\begin{enumerate}[{\normalfont (a)}]
 \item $h_\rho^{(m)}=h\ast k_\rho^{(m)}$
 \item $h_\rho^{(m)}=h'\ast k_\rho^{(m-1)}$
\end{enumerate}
\end{prop}

\begin{proof}
We only prove (b) for $m=1$ because (a) and the remaining part of (b) are standard facts about the differentiation of mollified functions, as is the fact that $h_\rho\in C^\infty(\R)$. 
Fix $x\in\R$. Then, for almost every $y\in\R$, the function 
\begin{equation*}
 D_{x,u}h(y):=\frac{h(x+u-y)-h(x-y)}{u}-h'(x-y)\,, \quad u\not=0\,,
\end{equation*}
converges to $0$ as $u\to0$. Furthermore, we have 
\begin{equation*}
 \babs{D_{x,u}h(y)}\leq 2\fnorm{h'}
\end{equation*}
for all $x,y,u$, where $\fnorm{h'}$ is the minimal Lipschitz constant for $h$. Hence, using dominated convergence, we conclude
\begin{align*}
 \Babs{\frac{h_\rho(x+u)-h\rho(x)}{u}-(h'\ast k_\rho) (x)}&=\Babs{\int_\R k_\rho(y)D_{x,u}h(y)dy}\leq \int_\R k_\rho(y)\babs{D_{x,u}h(y)}dy\notag\\
 &\longrightarrow 0\,,\quad\text{as }u\to0\,. 
\end{align*}
This implies that $h_\rho$ is differentiable at $x$ and $h_\rho'(x)=(h'\ast k_\rho)(x)$.
\end{proof}

\begin{cor}\label{moll2}
For each integer $m\geq1$ we have 
\[\fnorm{h_\rho^{(m)}}\leq\fnorm{h'}\int_\R\abs{k_\rho^{(m-1)}(y)}dy\leq C_m\rho^{m-1} \fnorm{h'}\,,\]
where the finite constant $C_m>0$ is defined by 
\[C_m:=\int_\R\abs{H_{m-1}(x)}\phi(x)dx\,,\]
and, for $j\in\N_0$, $H_j$ denotes the $j$-th monic Hermite polynomial. In particular, we have $C_1=1$ and $C_2=\sqrt{\frac{2}{\pi}}$ { and $C^2_m \leq {(m-1)!}$ for every $m\geq 3$.}
\end{cor}

\begin{proof}
 It is well-known (by the Rodrigues formula) that we have 
\begin{equation*}
 \phi^{(m)}(x)=(-1)^m H_m(x)\phi(x)\,,\quad x\in\R\,.
\end{equation*}
Hence, since $k_\rho(x)=\rho \phi(\rho x)$, for each $j\in\N_0$, 
\begin{equation*}
 k_\rho^{(j)}(x)=\rho^{j+1}(-1)^{j} H_j(\rho x)\phi(\rho x)=\rho^j \rho (-1)^{j} H_j(\rho x)\phi(\rho x)\,,\quad x\in\R\,.
\end{equation*}
Thus, by Proposition \ref{moll} we conclude that, for $m\in\N$ and $x\in\R$, we have
\begin{align*}
\abs{h_\rho^{(m)}(x)}&=\Bigl|\int_\R h'(y)k_\rho^{(m-1)}(x-y)dy\Bigr|\leq\fnorm{h'}\int_\R \abs{k_\rho^{(m-1)}(u)}du\notag\\
&\leq\fnorm{h'} \rho^{m-1}\int_\R \rho \babs{H_{m-1}(\rho u)}\phi(\rho u)du\notag\\
&=\rho^{m-1}\fnorm{h'} \int_\R\babs{H_{m-1}(y)}\phi(y)dy=C_m\rho^{m-1} \fnorm{h'}\,.
\end{align*}
\end{proof}

\begin{prop}\label{moll3}
 For each $\rho>0$ and each Lipschitz-continuous function $h$ we have 
 \[\fnorm{h-h_\rho}\leq\frac{\fnorm{h'}}{\rho}\sqrt{\frac{2}{\pi}}\,.\]
\end{prop}

\begin{proof}
 Fix $x\in\R$. Then, we have 
 \begin{align*}
  \babs{h_\rho(x)-h(x)}&=\Babs{\int_\R k_\rho(y)\bigl(h(x-y)-h(x)\bigr)dy}
  \leq\fnorm{h'}\int_\R\abs{y}k_\rho(y)dy\\
  &=\frac{\fnorm{h'}}{\rho}\int_\R\abs{u}\phi(u)du=\frac{\fnorm{h'}}{\rho}\sqrt{\frac{2}{\pi}}\,,
 \end{align*}
as claimed.
\end{proof}

\begin{proof}[End of the proof of Lemma \ref{smoothing}]
We may assume that $d_2(X,Y)>0$ because otherwise $X$ and $Y$ have the same distribution and $d_1(X,Y)=0$ as well.
Let $h$ be a Lipschitz-continuous function with Lipschitz-constant $1$. Note that by Corollary \ref{moll2} , for $\rho\geq\sqrt{\frac{\pi}{2}}$, we have $a_\rho h_\rho\in \mathcal{H}_2$, where $a_\rho:=\sqrt{\frac{\pi}{2}}\rho^{-1}$.
Hence, using Proposition \ref{moll3}, for $\rho\geq\sqrt{\frac{\pi}{2}}$ we obtain
\begin{align}\label{sl1}
 \babs{\E[h(X)]-\E[h(Y)]}&\leq \babs{\E[h(X)]-\E[h_\rho(X)]}\notag\\
 &\;+\babs{\E[h_\rho(X)]-\E[h_\rho(Y)]}
 +\babs{\E[h_\rho(Y)]-\E[h(Y)]}\notag\\
 &\leq\;2\fnorm{h-h_\rho}+\babs{\E[h_\rho(X)]-\E[h_\rho(Y)]}\notag\\
 &=2\fnorm{h-h_\rho}+a_\rho^{-1}\babs{\E[(a_\rho h_\rho)(X)]-\E[(a_\rho h_\rho)(Y)]}\notag\\
& \leq \frac{2^{3/2}}{\rho\sqrt{\pi}}+a_\rho^{-1}d_2(X,Y)\notag\\
&=\frac{2^{3/2}}{\rho\sqrt{\pi}}+ \rho\sqrt{\frac{2}{\pi}}d_2(X,Y)=:g(\rho)\,.
\end{align}
It can be checked that $g$ assumes its minimum on $(0,\infty)$ at 
\[\rho_0:=\frac{\sqrt{2}}{\sqrt{d_2(X,Y)}}\geq\sqrt{\frac{\pi}{2}}\]
because $d_2(X,Y)\leq1$ by assumption. Thus, as \eqref{sl1} holds uniformly over all $1$-Lipschitz functions $h$ and all $\rho\geq\sqrt{\frac{\pi}{2}}$, we obtain
\begin{equation*}
d_1(X,Y)\leq g(\rho_0)= \frac{4}{\sqrt{\pi}}\sqrt{d_2(X,Y)}\,.
\end{equation*}

\end{proof}

}

\normalem
\bibliography{gammafinal}{}
\bibliographystyle{alpha}
\end{document}